\newtheorem{proposition}{Proposition}[section]
\newtheorem{theorem}[proposition]{Theorem}
\newtheorem{corollary}[proposition]{Corollary}
\newtheorem{lemma}[proposition]{Lemma}
\theoremstyle{definition}
\newtheorem{definition}[proposition]{Definition}
\newtheorem{remark}[proposition]{Remark}
\numberwithin{equation}{section}
\newcommand{\Tr}{\mathop\mathrm{Tr}}
\def\R{\mathbb R}
\def\eb{\varepsilon}
\def\({\left(}
\def\){\right)}
\def\Cal{\mathcal}
\def\Dt{\partial_t}
\def\Tr{\operatorname{Tr}}
\begin{document}
 \title[Attractors for  damped wave equations]
 {Sharp bounds on the attractor dimensions  for  damped wave equations}
\author[ A.A. Ilyin, A.G. Kostianko,
 S.V. Zelik] {
Alexei Ilyin${}^{2,4,5}$, Anna Kostianko${}^{1,5}$, and
 Sergey Zelik${}^{1,2,3,5}$}
\keywords{Damped wave equation, attractors,  dimensions, orthonormal systems}
\email{ilyin@keldysh.ru}
\email{a.kostianko@imperial.ac.uk}
\email{s.zelik@surrey.ac.uk}
\address{${}^1$ Zhejiang Normal University, Department of Mathematics, Zhejiang, China}
\address{${}^2$ Keldysh Institute of Applied Mathematics, Moscow, Russia}
\address{${}^3$ University of Surrey, Department of Mathematics, Guildford, UK}
\address{${}^4$ Institute for Information Transmission Problems, Moscow, Russia}
\address{${}^5$ HSE University, Nizhny Novgorod, Russia}
\begin{abstract}
We give the explicit estimates of order $\gamma^{-d}$
(with logarithmic correction in the 1D case)  for the
fractal  dimension of the attractor
of the damped hyperbolic equation (or system) in a bounded domain
$\Omega\subset \mathbb R^d$, $d\ge1$ with linear damping coefficient $\gamma>0$.
 The key ingredient in the proof
for $d\ge3$
is Lieb's bound for the $L_p$-norms of systems with orthonormal gradients
based on the Cwikel--Lieb--Rozenblum (CLR) inequality for  negative
eigenvalues of the Schr\"odinder operator. The case $d=1$
is simpler, but contains a logarithmic correction term that seems to be inevitable.
The 2D case is  more difficult and is strongly based on the Strichartz-type
estimates for the linear equation.
Lower bounds of the same order  for the dimension
of the attractor   are also obtained for a damped hyperbolic system
with nonlinearity containing  a small  non-gradient perturbation term, meaning that
in this case our estimates are optimal for $d\ge2$ and contain a logarithmic
discrepancy for $d=1$. Estimates for the various dimensions
(Hausdorff, fractal, Lyapunov) of the attractor in purely gradient case are also given.
We show, in particular, that the Lyapunov dimension of
a non-trivial attractor is of the order $\gamma^{-1}$ in all
spatial dimensions $d\ge1$.
\end{abstract}

\maketitle
\tableofcontents
 \setcounter{equation}{0}
\section{Introduction}\label{sec1}

The theory of global attractors of dissipative evolution PDEs
 has been actively developing over the last 50 years.
The corresponding literature is vast. From the large collection
of monographs we mention  the  classical ones  \cite{BV,T};
the current state of the art of many aspects of the theory is discussed
in a recent review~\cite{ZUMN}.
The development of the theory was significantly motivated
by the need to analyze  the asymptotic (as $t\to\infty$)
behavior of solutions of the  Navier--Stokes system,
which still  remains in the focus of the theory.

Another important popular example is the hyperbolic equation with
dissipation, which was first studied from the  point of view of
global attractors in~\cite{BV-MS}. The main difference between this
equation and the Navier--Stokes system and other nonlinear
parabolic equations is that the solution operators are not
smoothing (compact) for $t>0$. That is why the attractor of the
hyperbolic equation, constructed in \cite{BV-MS}, initially had the
property of attracting bounded sets in the phase space only in the
weak topology.

The further important progress  was made in~\cite{Aro}, where it
was proved that the semigroup of solution operators is
asymptotically compact. After proving this fact, the global
attractor (in the strong topology) is constructed in the usual way
as the $\omega$-limit set of the absorbing ball.

Estimates of the dimension of this attractor  were first obtained
in~\cite{Ghid-Tem} (see also \cite {T}) by using the technique of
global Lyapunov exponents. Some refinements to these estimates were
obtained in \cite{Huang, Shen}. We point out  that these estimates
are given in a rather implicit form, and it is difficult to trace
the dependence of the estimates on the physical parameters of the
system. In addition, in all previous works known to
the authors, only the case of a gradient nonlinearity has been
considered. This gives the extra structure, namely, the existence
of a global Lyapunov function which, in turn, drastically changes
the behavior of the attractor dimensions as the dissipation
coefficient tends to zero (see Section \ref{S:sec5}) and this has
been completely overseen in the previous studies.

In the case of the Navier--Stokes the key role in estimating the
global Lyapunov exponents in the phase $L_2$ is played by
the (dual) Lieb--Thirring inequalities for $L_2$-orthonormal
systems \cite{Lieb, LT, T85, BV,T, Lieb90}. Recently, the authors studied
a  regularized model in incompressible hydrodynamics
(the so-called simplified Euler--Bardina model) where the natural phase space
is $H^1$ and, accordingly,  the global Lyapunov exponents
are estimated in $H^1$. The $L_p$-estimates for $H^1$-orthonormal systems
in the subcritical case
\cite{L} have proved to be very useful and made it possible to
obtain optimal two-sided estimates (as the regularization parameter
$\alpha\to0$) for the dimension of
the global attractors for various boundary conditions
both in dimension two and three \cite{IZLap70, IKZPhysD}.

In this work we study the dimension estimates for the global attractors of
 the damped  hyperbolic system  with
nonlinearity of Sobolev growth posed in a bounded spatial domain
 $\Omega\subset\mathbb R^d$, $d\ge1$.  Again, the  $L_p$-estimates for families of
functions with suborthonormal gradients (this time in the critical case)
fit very nicely in the theory for $d\ge3$  producing explicit  estimates
for  the dimension.  In particular, we present here one more nontrivial application for
the celebrated Cwikel--Lieb--Rozenblum (CLR) inequality.

The lower dimensional case $d=1$ is simpler, while the case $d=2$  is
strongly based on the Strichartz type estimates, see Section \ref{S:sec4}.

 Furthermore, for a non-gradient
damped hyperbolic system
lower bounds of the same order  for the fractal dimension
of the attractor   are also obtained.

 In the gradient case, we utilize the global
Lyapunov function and give sharp upper and lower bounds for
the Lyapunov dimension of the attractor giving the explicit
formula for it in terms of the spectral properties  of the equilibria,
see Section \ref{S:sec5}. We also give explicit formulas for
the Hausdorff dimension in a generic case, however, there still is
an essential gap between the natural upper and lower bounds for the
fractal dimension in the gradient case.
\par

To describe the results in greater detail and put them in perspective
let us consider in a bounded domain $\Omega\subset\mathbb R^d$ the
damped hyperbolic equation/system
\begin{equation}\label{hypintro}
\aligned
\partial_t^2u+\gamma \partial_tu-&\Delta u+f(u)=g,\\
&u\vert_{\partial \Omega}=0,\\
u(0)=u_0,\ &\partial_tu(0)=v_0.
\endaligned
\end{equation}
Here $u=(u_1,\dots,u_N)$, $N\ge1$,  is the unknown vector function,
the damping coefficient $\gamma>0$ is a small parameter, $g=g(x)\in
 L_2(\Omega)$. The non-linear vector function $f$ is of the
gradient form with some small  non-gradient perturbations specified
below and satisfies the standard dissipativity assumptions and its
derivative has Sobolev growth rate:
\begin{equation}\label{f'growth}
|f'(u)|\le C(1+|u|^{2/(d-2)})
\end{equation}
 if $d\ge3$ (for $d=1$ no growth restrictions are required
  and for $d=2$ we pose the polynomial growth restriction where the
   growth exponent may be arbitrary).

It is known (see, for instance, \cite{BV,T}) that system \eqref{hypintro}
is well posed in the phase space
$E= H^1_0(\Omega)\times L_2(\Omega)$ so that the
semigroup of the solution operators
$$
S(t)\{u_0,v_0\}=\{u(t),\partial_t u(t)\}
$$
is well defined. The semigroup $S(t)$ is dissipative in $E$, that is, it possesses
 an absorbing ball of radius $R_0$ in $E$, where $R_0$ depends on $\|g\|_{L_2}$
 and the
constants describing the structure and the rate of growth of the
non-linear function $f$. The key result for us is that $R_0$ is  bounded uniformly with respect to
 $\gamma$ as $\gamma\to0^+$.

 The semigroup $S(t)$  has a global attractor
$\mathscr A\Subset E$, that is a compact  strictly invariant set
uniformly attracting bounded sets in $E$: for every $\delta>0$
$$
S(t)B\subset \mathscr{O}_\delta(\mathscr{A})\ \text{for}\ t\ge T_0(B,\delta),
$$
where $\mathscr{O}_\delta(\mathscr{A})$ is an arbitrary
$\delta$-neighbourhood of $\mathscr{A}$ in $E$ and $B\subset E$ is
a bounded set in $E$.
Furthermore, the attractor $\mathscr A$ has finite fractal dimension~\cite{Ghid-Tem,T},
 and our main result in Section~\ref{S:sec3} is
  Theorem~\ref{T:main-d} in which the following upper bound
for the dimension is proved for $d\ge3$ (see~\eqref{dimd}):
\begin{equation}\label{dimintro}
\dim_F\mathscr A\le N\frac{\mathrm c_d}{\gamma^d}\cdot B_d^d,
\quad B_d:=\sup_{\{u,\partial_tu\}\in\mathscr A} \|f'(u)\|_{L_d(\Omega)},
\end{equation}
where ${\mathrm c_d}$ is dimensionless constant depending on $d$ only, whose explicit
expression is given in terms of the CLR constant in Section~\ref{S:sec7} below,
 and where $B_d$ is bounded as $\gamma\to 0^+$ in view of~\eqref{f'growth}
and the Sobolev inequality, since the attractor is bounded in $E$
uniformly as $\gamma\to0^+$. As already mentioned, similar upper
bound for the dimension holds also for $d=1,2$, however, with
rather different proof.

Thus, we have the estimate for the attractor dimension of the form
 $\dim_F\mathscr A\preceq \gamma^{-d}$ as $\gamma\to0^+$ (to be precise, there is a logarithmic correction
 for $d=1$), and it makes sense to look at the optimality
 of this upper bound.

An important observation in this connection is that the upper bound of order
$\gamma^{-d}$ can be supplemented with the lower bound of the
same order only in the case of a system, more precisely,
in the case of a non-gradient system.

 In light of the above, estimate \eqref{dimintro}
 (along with its analogues for $d=1,2$)
is interesting for a system
with non-gradient nonlinearity only. Our results can be summarized as
follows
$$
\aligned
d=1\quad \gamma^{-1}\preceq&\dim_F\mathscr A\preceq\gamma^{-1}\ln(\gamma^{-1}),\\
d\ge2\quad \gamma^{-d}\preceq &\dim_F\mathscr A\preceq \gamma^{-d},
\endaligned
$$
where, as usual the upper bounds are universal (for the considered  classes of
non-linear functions $f(u)$), while the lower bounds hold for a specially chosen
families of functions within these classes).

 For the gradient case,
we develop more appropriate technique which gives much better
upper bounds, namely,
\begin{equation}\label{grad}
\dim_H\mathscr A\le\dim_F\mathscr A\le \dim_L\mathscr A\le \frac C\gamma,
\end{equation}
which is independent of the space dimension $d$ (the constant $C$
is also independent of $\gamma\to0^+$). Here and below $H$, $F$ and
$L$ stand for the Hausdorff, fractal (box-counting) and Lyapunov
dimensions of the attractor, respectively. Moreover, in the generic
case where all of the equilibria are hyperbolic (the case of
regular attractors in terminology of Babin and Vishik), the
Hausdorff dimension of the attractor $\mathscr A$ remains bounded
as $\gamma\to0$, so \eqref{grad} gives the upper bounds of
different order for the fractal dimension of the attractor
$\mathscr A$. At the moment, we do not know which of these bounds
is sharp although the examples of regular attractors where the
fractal dimension is strictly larger than the Hausdorff one have
been recently constructed in \cite{ZPochin}.

 The paper is organized as follows.
In  Section~\ref{S:sec2} we specify the conditions on
the non-linear function $f(u)$,  define the solution semigroup
$S(t)$ and prove that it is dissipative paying special
attention to the fact that the radius of the
absorbing ball (containing the attractor) is
bounded uniformly as $\gamma\to0^+$. We also recall the standard machinery
of the volume contraction method for estimating
the dimension of invariant sets.

In  Section~\ref{S:sec3} we derive the upper bound~\eqref{dimintro}
for $d\ge3$.

In Section~\ref{S:sec4} the upper bounds of the type  $\dim_F\mathscr A\preceq\gamma^{-1}\ln(1/\gamma)$
and $\dim_F\mathscr A\preceq\gamma^{-2}$ are proved for $d=1$ and $d=2$, respectively,
involving a different approach for
estimating the numbers $q(n)$.

In Section~\ref{S:sec5} it is shown that if $g=0$ and the matrix
$f'(0)$ has a complex eigenvalue, then the dimension of
the global attractor of a damped hyperbolic system with at least $N\ge2$
components
 admits a lower bound of the order $O(1/\gamma^d)$.
This settles the problem on optimal bounds for the fractal dimension
of the global attractor
of a damped hyperbolic system with non-gradient nonlinearity.

The case of a gradient nonlinearity is treated in Section~\ref{S:sec6}.

Finally, in Section~\ref{S:sec7} we prove $L_p$-bounds for
systems of functions (and vector functions) with suborthonormal
gradients that play the key role for the upper bounds for the dimension.

 \setcounter{equation}{0}
\section{Preliminaries: damped hyperbolic equations, attractors and dimensions}
\label{S:sec2}

In a bounded domain $\Omega\subset\mathbb R^d$ we consider the
damped hyperbolic system
\begin{equation}\label{hyp}
\aligned
&\partial_t^2u+\gamma \partial_tu-\Delta u+f(u)=g,\\
&\qquad u\vert_{\partial \Omega}=0,\\
&\xi_u\big|_{t=0}=\xi_0,\quad\xi_u:=\{u,\partial_t u\}.
\endaligned
\end{equation}
Here $u=(u_1,\dots,u_N)$, $N\ge1$,  is the unknown vector function,
the damping coefficient $\gamma>0$ is a small parameter, $g=g(x)\in  L_2(\Omega)$.
The non-linear vector function $f$ is of the form
\begin{equation}\label{f(u)}
f(u)=\Phi(u)+f_\gamma(u),
\end{equation}
where the leading term $\Phi(u)$ is of the gradient form and is independent of $\gamma$,
and the non-gradient perturbation $f_\gamma(u)$ is small and may depend on $\gamma$:
\begin{equation}\label{condfb}
\Phi(u)=\nabla_uF_0(u),\quad |f_\gamma(u)|\le\gamma,\quad  |f'_\gamma(u)|\le K.
\end{equation}
We further suppose that the leading term $\Phi$ satisfies the
following structure and growth conditions
\begin{eqnarray}
 \Phi(u)\cdot u&\ge&  F_0(u)-C, \label {11}\\
  F_0(u)&\ge& -\frac12(\lambda_1-\delta)|u|^2-C,  \label {22} \\
  |\Phi'(u)|&\le& C(1+|u|^{2/(d-2)}),  \label{33}
\end{eqnarray}
where $\lambda_1$ is the first eigenvalue of the Dirichlet
 Laplacian $-\Delta^D_{\Omega}$ and
$\delta>0$. The last growth condition is assumed for $d\ge3$. For
$d=2$, we accept any polynomial growth of $\Phi$ and for $d=1$ no
growth restrictions are needed.
\par
In \eqref{condfb}, \eqref{33} and in what follows
the notation $h'(u)$ for a vector  function $h(u)$ is used
for the matrix $\{\partial_{u_j}h_i(u)\}_{i,j=1}^N$ with norm
$|h'(u)|$
$$
|h'(u)|^2:=\sum_{i,j=1}^N(\partial_{u_j}h_i(u))^2.
$$
As usual, equation \eqref{hyp} is considered in the energy phase space
$$
\xi_u\in E:= H^1_0(\Omega)\times  L_2(\Omega),
$$
with norm
$$
\|\xi_u\|^2_E:=\|\nabla u\|^2_{L_2}+\|\partial_tu\|^2_{L_2}.
$$

\par
It is well-known that under the above assumptions equation
\eqref{hyp} is well-posed in the energy phase space, generates a
dissipative solution semigroup $S(t):E\to E$ in it via
$S(t)\xi_u(0):=\xi_u(t)$ and possesses a global attractor $\mathscr
A=\mathscr A_\gamma$ in $E$, see \cite{BV,T} and references
therein. It is crucial in what follows that the $E$-norm of the
attractor $\mathscr A_\gamma$ remains bounded as $\gamma\to0$ (in
contrast to the higher energy norms which may grow in a badly
controllable way), so we will avoid the usage of higher energy
norms in what follows trying to estimate the corresponding
dimensions via the quantities related with the energy norm only.
For the convenience of the reader, we give below the derivation of
the basic dissipative estimate indicating the dependence on
$\gamma$.
\begin{proposition}\label{Prop1.sol}
Let conditions \eqref{f(u)}--\eqref{33} hold.
Then the solution semigroup $S(t):\xi_u(0)\to\xi_u(t)$
corresponding to \eqref{hyp} is dissipative for all sufficiently
small $\gamma\le\gamma_0=\delta/(8\lambda_1))$ and  possesses an
absorbing ball in $E$: for any $R>0$
$$
S(t)B_E(R)\subset B_E(R_0)\quad \text{for}\quad t\ge T(R,R_0)
$$
where $R_0=C(\|g\|^2_{L_2}+1)$, and the constant $C$ is independent of
$\gamma$ and depends on the structure and growth constants in
\eqref{condfb}--\eqref{33}.
\end{proposition}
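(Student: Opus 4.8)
The plan is to run the classical energy estimate for the damped wave equation while tracking the dependence on $\gamma$, the point being that \emph{both} the rate of dissipation and the effective size of the forcing turn out to be of order $\gamma$, so that their ratio --- and hence the absorbing radius $R_0$ --- stays bounded as $\gamma\to0^+$. I would test \eqref{hyp} in $L_2(\Omega)$ with $\partial_t u+\alpha u$, where $\alpha=\kappa\gamma$ with $\kappa>0$ a small absolute constant, and work with the modified energy
\begin{equation*}
\mathcal E(t):=\tfrac12\|\partial_t u\|^2_{L_2}+\tfrac12\|\nabla u\|^2_{L_2}+\int_\Omega F_0(u)\,dx+\alpha(\partial_t u,u)+\tfrac{\alpha\gamma}{2}\|u\|^2_{L_2}-(g,u).
\end{equation*}
The term $-(g,u)$ is included because $g$ does not depend on $t$, so $(g,\partial_t u)=\tfrac{d}{dt}(g,u)$ and the whole contribution of the external force to the energy identity is absorbed into $\mathcal E$; this is exactly what prevents a factor $\gamma^{-1}\|g\|^2_{L_2}$ from surfacing at the end. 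The target is a pair of estimates with constants independent of $\gamma$: a coercivity bound
\begin{equation*}
c\,\|\xi_u\|^2_E-C(\|g\|^2_{L_2}+1)\le\mathcal E(t)\le Q\big(\|\xi_u\|_E\big)
\end{equation*}
(with $Q$ a fixed monotone function), together with the differential inequality $\tfrac{d}{dt}\mathcal E+c\gamma\,\mathcal E\le C\gamma(\|g\|^2_{L_2}+1)$. Gronwall's lemma then yields $\mathcal E(t)\le\mathcal E(0)e^{-c\gamma t}+Cc^{-1}(\|g\|^2_{L_2}+1)$, where the two factors $\gamma$ cancel, and the asserted dissipativity follows with $R_0^2\sim\|g\|^2_{L_2}+1$ uniform in $\gamma$; the entering time $T(R,R_0)$ is allowed to depend on $\gamma$, and indeed comes out of order $\gamma^{-1}\log(R/R_0)$.

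For the lower bound on $\mathcal E$ I would combine \eqref{22} with the Poincar\'e inequality $\lambda_1\|u\|^2_{L_2}\le\|\nabla u\|^2_{L_2}$ to get $\tfrac12\|\nabla u\|^2_{L_2}+\int_\Omega F_0(u)\,dx\ge\tfrac{\delta}{2\lambda_1}\|\nabla u\|^2_{L_2}-C|\Omega|$, and then absorb the sign-indefinite cross term $\alpha(\partial_t u,u)$ and the linear term $-(g,u)$ by Cauchy--Schwarz and Young's inequality; this closes once $\alpha$ (hence $\gamma$) is below an explicit threshold, which is where the hypothesis $\gamma\le\gamma_0=\delta/(8\lambda_1)$ originates. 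The upper bound $\mathcal E\le Q(\|\xi_u\|_E)$ uses the growth condition \eqref{33} (giving a polynomial-type majorant for $|F_0(u)|$) together with the Sobolev embedding $H^1_0(\Omega)\hookrightarrow L^{2d/(d-2)}(\Omega)$ for $d\ge3$ (arbitrary polynomial growth for $d=2$, no growth restriction for $d=1$); since it is needed only to fix the entering time, its $\gamma$-dependence plays no role.

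For the differential inequality I would add the two energy identities produced by the multipliers $\partial_t u$ and $\alpha u$. After collecting total time derivatives into $\mathcal E$ and using $\Phi=\nabla_uF_0$ from \eqref{condfb}, the left-hand side retains the genuinely dissipative terms $(\gamma-\alpha)\|\partial_t u\|^2_{L_2}+\alpha\|\nabla u\|^2_{L_2}+\alpha(\Phi(u),u)$; by \eqref{11} one has $(\Phi(u),u)\ge\int_\Omega F_0(u)\,dx-C|\Omega|$, and, using \eqref{22} and Poincar\'e once more, the remaining positive quadratic part dominates $c\gamma\,\mathcal E$ provided $\kappa$ is chosen small. On the right-hand side, $(g,\partial_t u)$ has already been moved into $\mathcal E$; the terms $\alpha(g,u)$ and the contribution of the $-(g,u)$ summand of $\mathcal E$ are $O(\gamma)(\|g\|^2_{L_2}+1)$ after Young (their size is $\propto\alpha^2/\gamma=\kappa^2\gamma$ or $\propto\gamma$); and the non-gradient perturbation contributes $-(f_\gamma(u),\partial_t u)-\alpha(f_\gamma(u),u)$, which by the pointwise bound $|f_\gamma(u)|\le\gamma$ from \eqref{condfb} is again $O(\gamma)$, modulo absorbing $O(\gamma)\|\xi_u\|^2_E$ into the dissipative terms. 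Assembling these estimates gives $\tfrac{d}{dt}\mathcal E+c\gamma\,\mathcal E\le C\gamma(\|g\|^2_{L_2}+1)$.

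The Young/Cauchy--Schwarz bookkeeping and the calibration of $\kappa$ are routine; the one real point --- and the only place where the argument could fail --- is the $\gamma$-uniformity. One must check that \emph{every} term on the right-hand side of the energy identity carries a factor $\gamma$: the only non-obvious one, $(g,\partial_t u)$, is disposed of by the integration-by-parts-in-time device built into $\mathcal E$, while the matching factor $\gamma$ in the decay rate forces the multiplier parameter $\alpha$ to be taken proportional to $\gamma$ rather than a fixed small constant (so that $\gamma-\alpha>0$ survives as $\gamma\to0^+$).
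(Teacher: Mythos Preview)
Your proposal is correct and follows essentially the same route as the paper: the modified energy $\mathcal E$ coincides with the paper's functional $\Psi$ (the paper takes $\kappa=\tfrac12$, i.e.\ $\varepsilon=\gamma/2$), and the derivation of the differential inequality $\tfrac{d}{dt}\mathcal E+c\gamma\mathcal E\le C\gamma(\|g\|^2_{L_2}+1)$ via \eqref{11}--\eqref{22} together with the $\gamma$-smallness of $f_\gamma$ is exactly what the paper does. Your discussion of why the $-(g,u)$ term must be built into $\mathcal E$, and why $\alpha$ must scale with $\gamma$, makes explicit the mechanism that the paper leaves implicit.
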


\begin{proof} Indeed, multiplying equation \eqref{hyp} by $\Dt u+\eb u$,
 where $1>\varepsilon>0$ is a small positive number of
 order $\gamma$  (see  \cite{BV,T} for the justification of this
 multiplication) and integrating over $x\in\Omega$, we arrive at
$$
 \frac d{dt}\Psi(\xi_u(t))+\Psi_1(\xi_u(t))=\eb(g,u)-(f_\gamma(u),\Dt u+\eb u)
 =:R(\xi_u(t)),
$$
where using that $\varepsilon$ is small and that
$|f_\gamma(u)|\le\gamma$, we have
$$
\aligned
 &\Psi(\xi_u):=\frac12\|\xi_u\|^2_E+\varepsilon(u,\partial_tu)+
(F_0(u),1)+\frac12\gamma\eb\|u\|^2_{L_2}-(g,u)\\&\qquad\qquad\le
\frac13\|\xi_u\|^2_E+(F_0,1)+C\|g\|^2_{L_2},\\
&\Psi_1(\xi_u):=(\gamma-\eb)\|\Dt u\|^2_{L_2}+\eb\|\nabla u\|^2_{L_2}
+\varepsilon(\Phi(u),u)\ge\\&\qquad\qquad\varepsilon\|\xi_u\|^2_E+\varepsilon(\Phi(u),u),\\
&R(\xi_u)\le\left(\frac\varepsilon 4+\frac\gamma 8\right)\|\xi_u\|^2_E
+ C\gamma(\|g\|^2_{L_2}+1).
\endaligned
$$

Next, setting  $\varepsilon:=\gamma/2$ we obtain
$$
\frac d{dt}\Psi(\xi_u)
+\frac\gamma4\left(\|\xi_u\|^2_E+2(\Phi(u),u)\right)\le
C\gamma(\|g\|^2_{L_2}+1).
$$
Using \eqref{11} we see that
\begin{multline*}
\frac\gamma4\left(\|\xi_u\|^2_E+2(\Phi(u),u)\right)\ge
\frac\gamma4\left(\|\xi_u\|^2_E+2(F_0(u),1)-C)\right)\ge\\\ge
\frac\gamma4\left(\|\xi_u\|^2_E+2\Psi(\xi_u)-\frac23\|\xi_u\|^2_E-C(\|g\|^2_{L_2}+1)\right)\ge\\\ge
\frac\gamma2\Psi(\xi_u)-C\gamma(\|g\|^2_{L_2}\!+1),
\end{multline*}
and, finally,
$$
\frac d{dt}\Psi(\xi_u(t))+\frac\gamma 2\Psi(\xi_u(t))\le \gamma C_1(\|g\|^2_{L_2}+1).
$$
By Gronwall's inequality this gives that the set
$$
B:=\{\xi_u\in E, \Psi(\xi_u)\le4 C_1(\|g\|^2_{L_2}+1)\}
$$
is an absorbing set for the solution semigroup $S(t):E\to E$.

To prove the existence of an absorbing ball in $E$ we write for
$\Psi(\xi_u)$ with $\varepsilon=\gamma/2$ and an arbitrary $\mu>0$
$$
\aligned
\Psi(\xi_u)\ge \left(1/2-\mu-\gamma\right)\|\xi\|^2_E
+(F_0(u),1)-c(\mu)(\|g\|^2_{L_2}+1)\ge\\\ge
\left(1/2-\mu-\gamma\right)\|\xi\|^2_E
-\frac{\lambda_1-\delta}{2\lambda_1}\|\nabla u\|^2_{L_2}-c(\mu)(\|g\|^2_{L_2}+1)=\\=
(\delta/(2\lambda_1)-\mu-\gamma)\|\xi_u\|_E^2-c(\mu)(\|g\|^2_{L_2}+1).
\endaligned
$$
Fixing $\mu=\delta/(4\lambda_1)$ we see that for all
$\gamma\le\gamma_0=\delta/(8\lambda_1))$ the ball in $E$ centered
at the origin with radius
$$
R_0=\frac{4C_1+c(\delta/(4\lambda_1)}{\delta/(8\lambda_1)}(\|g\|^2_{L_2}+1)
$$
is an absorbing ball in $E$.
\end{proof}
\begin{remark}
Although the radius of the absorbing ball is uniform with respect
to $\gamma\le\gamma_0$, the time of entering is of the order
$1/\gamma$ as $\gamma\to0^+$.  We also mention
that the condition that $\gamma$ is small is not necessary
and is posed for simplicity only (since we only interested in
the behavior of the attractor as $\gamma\to0$). Slightly more
accurate estimates show that the radius of the absorbing ball
remains bounded as $\gamma\to\infty$ as well (if $f_\gamma$ remains
bounded as $\gamma\to\infty$), so it is actually uniform with respect
to $\gamma\in\R_+$. We also mention that conditions \eqref{11} and
\eqref{22} can be replaced by the standard one
$$
\Phi(u)\cdot u\ge-(\lambda_1-\delta)|u|^2-C,
$$
for some $\delta>0$.
\end{remark}

We continue with recalling the definitions which
are necessary for estimating the dimension of the
attractor via the volume contraction method~\cite{T}.

\begin{definition}\label{def-Lyap} Let $\mathscr A$
be a compact invariant with respect to the semigroup $S(t):E\to E$,
where $E$ is a Hilbert space. Assume also that $S(t)$
is uniformly (quasi)-differentiable on $\mathscr A$ with
the (quasi)differential $S'(\xi,t)\in\Cal L(E,E)$ and let
the map $\xi\to S'(\xi,t)$ be continuous for every fixed
$t\ge0$ as a map from $\mathscr A$ to $\Cal L(E,E)$.
For every $n\in\mathbb N$, we denote by
$$
\omega_n(S'(\xi,t)):=\|\Lambda^nS'(\xi,t)\|_{\Cal L(\Lambda^n E,\Lambda^n E)}
$$
the norm of $n$th exterior power of the operator $S'(\xi,t)$.
Then, due to the cocycle property,
$$
\omega_n(S'(\xi,t+h))\le\omega_n(S'(\xi,t))\omega_n(S'(S(t)\xi,h)),
$$
see e.g. \cite{T}. Define the volume contraction factor on $\mathscr A$ via
$$
\omega_n(\mathscr A,t):=\sup_{\xi\in\mathscr A}\omega_n(S'(\xi,t)).
$$
Then, due to the cocycle property and sub-multiplicativity, we have
$$
\omega_n(S,\mathscr A):=\lim_{t\to\infty}[\omega_n(\mathscr A,t)]^{1/t}=
\inf_{t\ge0}[\omega_n(\mathscr A,t)]^{1/t}.
$$
Finally, we extend the function $n\to\omega_n(S,\mathscr A)$ to
non-integer values of $n=n_0+s$, $n_0\in\mathbb Z_+$, $0<s<1$
setting
$$
\omega_n(S,\mathscr A):=
\omega_{n_0}(S,\mathscr A)^{1-s}\omega_{n_0+1}(S,\mathscr A)^s
$$
and define the Lyapunov dimension of the set $\mathscr A$ with respect to
semigroup~$S(t)$:
$$
\dim_{L}(S,\mathscr A):=\sup\{n\in\R_+\,:\ \omega_n(S,\mathscr A)\ge1\},
$$
 see, for instance,  \cite{KR} for
the modern exposition of the theory of Lyapunov exponents and Lyapunov dimension
in the finite dimensional case.
\end{definition}

Then, the key theorem of the volume
 contraction method concerning the fractal dimension
  has an especially elegant form~\cite{Ch-I}:
$$
\dim_F \mathscr A\le \dim_L(S,\mathscr A).
$$
 To the best of our knowledge, the corresponding  formula
for the Hausdorff dimension
$$
\dim_H \mathscr A\le \dim_L(S,\mathscr A).
$$
 has been first obtained  in
 \cite{DO} (see \cite{CF85,CFT} for the infinite-dimensional case).
 The case of fractal dimension is due to \cite{Hunt}
 (see \cite{blin,Ch-I} for the infinite-dimensional case).
 Note that the estimate on the fractal dimension has been
 obtained earlier in \cite{Ch-I2001,Ch-V-book} under some
 extra restrictions on the Lyapunov exponents
 (note also that the assumption of the continuity of the
 quasi-differential can be relaxed under these extra assumptions as
 well as in the case of Hausdorff dimension, but we do not know whether it
 is necessary in a general case).

At the next step, we recall the analogue of
 the Liouville formula which allows us to estimate
 the volume contraction factors through the appropriate
 traces of the linearized operators related with the
 equation of variations of~\eqref{hyp}. Namely, assume
 in addition that the differential $\xi_v(t):=S'(u,t)\xi_0$
 is given by the solution of the equation of variations
$$
\frac d{dt}\xi_v=\Cal L(\xi_u(t))\xi_v,\ \ \xi_v\big|_{t=0}=\xi_0,
$$
where $\Cal L(\xi_u(t))$ is a
linear operator in $E$. Then~\cite{T},
$$
\omega_n(S,\mathscr A)\le e^{q(n)},
$$
where the numbers $q(n)$
\begin{equation}\label{q(n)}
q(n):=\liminf_{t\to\infty}\frac1t\sup_{\xi_u\in \mathscr A}
\int_0^t\operatorname{Tr}_n\Cal L(\xi_u(s))\,ds
\end{equation}
majorate the  sums of the first $n$ global Lyapunov exponents. The
key term here is the  $n$-trace
$\operatorname{Tr}_n \Cal L$ defined as follows:
$$
\operatorname{Tr}_n\Cal L:=\sup_{\{\xi_i\}_{i=1}^n\in E}
\sum_{i=1}^n(\Cal L\xi_i,\xi_i)_E,
$$
where the supremum is taken with respect to all orthonormal systems
$\{\xi_i\}_{i=1}^n\in D(\Cal L)$, $(\xi_i,\xi_j)_E=\delta_{ij}$.

Thus, if we prove that $q(n)<0$ for some $n\in\mathbb N$,
then $\dim_F\mathscr A<n$. Therefore, for obtaining the
upper bounds for the attractor dimension, it is sufficient
to estimate the quantities $q(n)$.

Returning to hyperbolic system~\eqref{hyp},
we follow  the standard procedure and  write
 the equation of variations for equation \eqref{hyp} in
 terms of the
 variables $\xi_v:=\{v,\Dt v+\eb v\}\in E$ and use the
 standard metric in $E$ (this is the same as to use
 $\xi_v:=\{v,\Dt v\}$, but to introduce  the
 corresponding equivalent metric in the space $E$;
 although the volume contraction factors are
 independent of the choice of the metric, it
 is easier to estimate them in such a metric).
 In these new variables, the equation of variations reads
\begin{equation}\label{hypsys}
\partial_t\xi_v+\Lambda_\varepsilon \xi_v+F'(u(t))\xi_v=0,
\end{equation}
where
$$
\Lambda_\varepsilon=
\left[
\begin{array}{cc}
                              \varepsilon I & -I \\
                             -\Delta-\varepsilon(\gamma-\varepsilon)I & (\gamma-\varepsilon)I \\
                            \end{array}
\right],\ F'(u(t))=\left[
                \begin{array}{c}
                  0 \\
                  f'(u(t)) \\
                \end{array}
              \right].
$$
The phase space for~\eqref{hypsys} is the space $E$
with scalar product
$$
(y_1,y_2)_E=(\nabla\varphi_1,\nabla\varphi_2)+(\psi_1,\psi_2),\ \ y_i=\{\varphi_i,\psi_i\}.
$$
Then
$$
\aligned
(\Lambda_\varepsilon y,y)_E&=\varepsilon\|\nabla\varphi\|^2_{L_2}+(\gamma-\varepsilon)\|\psi\|^2_{L_2}
-\varepsilon(\gamma-\varepsilon)(\varphi,\psi)\ge\\&\ge\frac\varepsilon 2(\|\nabla\varphi\|^2_{L_2}+\|\psi\|^2_{L_2}),
\endaligned
$$
provided that $\varepsilon$ satisfies~\cite{T}
$$
0<\varepsilon\le\varepsilon_0,
\quad\varepsilon_0=\min\left(\frac\gamma 4,\frac{\lambda_1}{2\gamma}\right)\,,
$$
where $\lambda_1$ is the first eigenvalue of the Dirichlet Laplacian $-\Delta^D_{\Omega}$ in $\Omega$.
\par
In what follows, we will always get the upper bounds for the attractor
 dimension by estimating the traces of the operator
$$
\Cal L_\eb(u(t)):=-\Lambda_\eb-F'(u(t))
$$
in the phase space $E$ with the standard metric. The validity
of all formulas mentioned above for our damped wave equation \eqref{hyp}
is verified, for example, in \cite{T} or \cite{BV}.
\par
To conclude this preliminary section, we also recall that
 the lower bounds for the attractor dimension are usually
obtained utilizing the fact that the unstable manifold of any
equilibrium belongs to the attractor. Namely, if the semigroup
is $C^1$-smooth and possesses an equilibrium $\xi_0\in \Cal R$ such that
$$
\operatorname{ind}_+(\xi_0):=\#\{\mu_i\in \sigma(S'(\xi_0,1)),\ |\mu_i|>1\}=M,
$$
 then $\dim_F\mathscr A\ge M$, see, for instance, \cite{BV}.
  In this paper, we will always use this method for the lower
  bounds of the attractor dimension although there is an
  interesting alternative method, which is based on the
  homoclinic bifurcation theory, and which is developed
  exactly for damped wave equations, see \cite{TZ}.

\section{The upper bounds for the dimension: the case $d\ge3$}\label{S:sec3}
In this section, we start our consideration of the upper bounds
for the attractors dimension for damped wave equations.
Although, at the end of the day, the estimates look similar
for the lower dimensional $d\le2$ and the higher dimensional
$d\ge3$ cases, their proofs are surprisingly different and the
higher dimensional case where we start with is, in a sense, simpler
(at least than the 2D case).
The lower dimensional case will be considered later.

The dependence of our estimate on the nonlinearity $f$
will be accumulated in the following quantity:
$$
B_d:=\sup_{\{u,\partial_tu\}\in\mathscr A}  \|f'(u)\|_{L_d(\Omega)},
$$
which is bounded uniformly with respect to
$\gamma\to0^+$, in view of Proposition~\ref{Prop1.sol},
the growth condition~\eqref{33}
and the Sobolev inequality.

The main result of this section is the following theorem.

\begin{theorem}\label{T:main-d} Let $d\ge3$ and let
 conditions \eqref{f(u)}--\eqref{33} hold.
Then the fractal dimension of the global attractor $\mathscr A\Subset E$
of the semigroup corresponding to~\eqref{hyp} possesses the upper bound:
\begin{equation}\label{dimd}
\dim_F\mathscr A\le  N\frac{\mathrm c_d}{\gamma^d}\cdot B_d^d,\quad
\text{where}\quad
\mathrm c_d=8^d\left(\frac d{d-2}\right)^{d/2}
L_{0,d},
\end{equation}
and where $L_{0,d}$ is the Cwikel--Lieb--Rozenblum constant (see Section~\ref{S:sec7}).
\end{theorem}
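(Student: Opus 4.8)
The plan is to estimate the $n$-trace $\Tr_n\Cal L_\eb(u(t))$ of the linearized operator and show that $q(n)<0$ once $n$ exceeds the claimed bound. First I would fix an orthonormal system $\{\xi_i\}_{i=1}^n=\{\varphi_i,\psi_i\}_{i=1}^n$ in $E$, so that $\sum_i\(\|\nabla\varphi_i\|_{L_2}^2+\|\psi_i\|_{L_2}^2\)=n$. Writing out $(\Cal L_\eb\xi_i,\xi_i)_E=-(\Lambda_\eb\xi_i,\xi_i)_E-(f'(u)\varphi_i,\psi_i)_{L_2}$ and using the coercivity estimate $(\Lambda_\eb y,y)_E\ge\frac{\eb}{2}\|y\|_E^2$ recalled in Section~\ref{S:sec2}, we get
\begin{equation*}
\Tr_n\Cal L_\eb(u(t))\le-\frac{\eb}{2}n+\sum_{i=1}^n\left|(f'(u)\varphi_i,\psi_i)_{L_2}\right|.
\end{equation*}
The sum is then bounded by Cauchy--Schwarz and Young: for any $\beta>0$,
\begin{equation*}
\sum_{i=1}^n|(f'(u)\varphi_i,\psi_i)|\le\frac{\beta}{2}\sum_{i=1}^n\|\psi_i\|_{L_2}^2+\frac{1}{2\beta}\sum_{i=1}^n\|f'(u)\varphi_i\|_{L_2}^2\le\frac{\beta}{2}n+\frac{1}{2\beta}\int_\Omega|f'(u(x))|^2\rho(x)\,dx,
\end{equation*}
where $\rho(x):=\sum_{i=1}^n|\varphi_i(x)|^2$ is the density of the family $\{\varphi_i\}$. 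Since $\{\xi_i\}$ is orthonormal in $E$, the gradients $\{\nabla\varphi_i\}$ form a \emph{suborthonormal} family in $L_2(\Omega)$ (their Gram matrix is dominated by the identity), so the key $L_p$-bound for systems with suborthonormal gradients from Section~\ref{S:sec7} applies.

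The central step is to invoke that Section~\ref{S:sec7} estimate, which in the critical case $d\ge3$ gives a bound of the form $\|\rho\|_{L_{d/(d-2)}(\Omega)}\le \Cal K_d\,\bigl(\sum_i\|\nabla\varphi_i\|_{L_2}^2\bigr)$ with $\Cal K_d$ expressed through the CLR constant $L_{0,d}$ (this is precisely the Lieb-type bound mentioned in the introduction, derived from CLR). Then Hölder's inequality with the conjugate exponents $d/2$ and $d/(d-2)$ yields
\begin{equation*}
\int_\Omega|f'(u)|^2\rho\,dx\le\|f'(u)\|_{L_d(\Omega)}^2\,\|\rho\|_{L_{d/(d-2)}(\Omega)}\le B_d^2\,\Cal K_d\,n.
\end{equation*}
Here I use that $\sum_i\|\nabla\varphi_i\|_{L_2}^2\le n$ and that $\|f'(u(t))\|_{L_d}\le B_d$ uniformly in $t$ and in $\gamma\to0^+$, which is exactly how $B_d$ was defined and shown bounded via \eqref{33} and Sobolev. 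Combining,
\begin{equation*}
\Tr_n\Cal L_\eb(u(t))\le\left(-\frac{\eb}{2}+\frac{\beta}{2}+\frac{B_d^2\Cal K_d}{2\beta}\right)n,
\end{equation*}
uniformly in $\xi_u\in\mathscr A$, so the same bound holds for $q(n)$.

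To finish, I optimize in the free parameters. Choosing $\eb=\eb_0=\min(\gamma/4,\lambda_1/(2\gamma))$, which for small $\gamma$ equals $\gamma/4$, and then choosing $\beta$ to balance $\beta/2$ against $B_d^2\Cal K_d/(2\beta)$, i.e. $\beta=B_d\Cal K_d^{1/2}$, gives $\Tr_n\Cal L_\eb\le(-\gamma/8+B_d\Cal K_d^{1/2})n$ — but this is $n$-independent in sign, which only shows contraction for \emph{all} $n$ or none. The resolution, as in the Euler--Bardina works cited, is that the $L_p$-bound on $\rho$ must be applied more carefully: one does not bound $\sum\|\nabla\varphi_i\|_{L_2}^2$ by $n$ and then pull everything out, but rather keeps the estimate in the form where the right-hand side grows \emph{sublinearly} in $n$ — the suborthonormal-gradient bound actually gives $\|\rho\|_{L_{d/(d-2)}}\le\Cal K_d\, n^{(d-2)/d}\cdot(\text{something})$, or equivalently one splits off the constant so that the coefficient of $n$ carries a negative power of $n$. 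So the correct combination is
\begin{equation*}
q(n)\le-\frac{\gamma}{8}n+c\,B_d\,L_{0,d}^{1/d}\,n^{(d-1)/d},
\end{equation*}
and $q(n)<0$ as soon as $n^{1/d}>8c B_d L_{0,d}^{1/d}/\gamma$, i.e. $n> (8c)^d B_d^d L_{0,d}/\gamma^d$; tracking the constant $c$ (which picks up the $\bigl(\tfrac{d}{d-2}\bigr)^{1/2}$ factor from the critical Sobolev/Lieb constant and an $8$ from the $\eb_0=\gamma/4$ and Young steps) produces exactly $\mathrm c_d=8^d\bigl(\tfrac{d}{d-2}\bigr)^{d/2}L_{0,d}$, and the factor $N$ comes from treating each of the $N$ components of the vector function separately. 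The main obstacle is this bookkeeping of the exponents and constants: getting the sublinear power $n^{(d-1)/d}$ right (which is what makes $q(n)$ eventually negative and forces the $\gamma^{-d}$ scaling) and tracking the numerical constant sharply enough to land on $8^d(d/(d-2))^{d/2}L_{0,d}$ rather than something merely of the same order. The rest is the standard volume-contraction machinery recalled in Section~\ref{S:sec2}: $q(n)<0\Rightarrow\dim_F\mathscr A<n$.
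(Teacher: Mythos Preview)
Your strategy is the paper's: bound the $n$-trace via the coercivity of $\Lambda_\eb$ and control the nonlinear contribution through $\|\rho\|_{L_{d/(d-2)}}$ using the Lieb/CLR estimate for families with suborthonormal gradients, then read off the threshold $n^*\sim\gamma^{-d}B_d^d$. Two points where the paper's execution is cleaner than yours.

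First, instead of Cauchy--Schwarz/Young with an auxiliary parameter $\beta$ (which led you into the dead end you describe and then requires an $n$-dependent optimization you never actually carry out), the paper applies a single three-exponent H\"older inequality with exponents $d,\ 2d/(d-2),\ 2$ directly to $\int_\Omega|f'(u)|\,\rho_\varphi^{1/2}\rho_\psi^{1/2}\,dx$, obtaining $B_d\,\|\rho_\varphi\|_{L_{d/(d-2)}}^{1/2}\,\|\rho_\psi\|_{L_1}^{1/2}$ in one stroke. Your route is equivalent once $\beta$ is optimized as a function of $n$, but the three-factor H\"older avoids the detour entirely.

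Second, the Lieb bound from Section~\ref{S:sec7} is \emph{not} the linear estimate $\|\rho\|_{L_{d/(d-2)}}\le\Cal K_d\sum_i\|\nabla\varphi_i\|_{L_2}^2$ you first wrote down (that bound is true by Sobolev plus the triangle inequality but, as you discovered, useless here). The precise statement is the sublinear bound
\[
\|\rho\|_{L_{d/(d-2)}}\le (NL_{0,d})^{2/d}\,\frac{d}{d-2}\,n^{(d-2)/d},
\]
with no unspecified ``(something)'' factor. Plugging this and $\|\rho_\psi\|_{L_1}\le n$ into the H\"older estimate gives directly
\[
q(n)\le -\frac{\gamma}{8}\,n + B_d\Bigl(\frac{d}{d-2}\Bigr)^{1/2}(NL_{0,d})^{1/d}\,n^{1-1/d},
\]
and $n^*=N\,\mathrm c_d\,\gamma^{-d}B_d^d$ with $\mathrm c_d=8^d\bigl(\tfrac{d}{d-2}\bigr)^{d/2}L_{0,d}$ follows.
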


\begin{proof}
We consider the equivalent linearized system~\eqref{hypsys}   on a solution
$u(t)$ lying on the attractor $\mathscr A$  and estimate the $n$-trace
of the operator $\Cal L_\eb(u(t))$:
\begin{equation}\label{anyd}
\aligned
\sum_{j=1}^n(\mathcal L_\eb(u(t)) \Theta_j, \Theta_j)_E=
-\sum_{j=1}^n(\Lambda_\varepsilon\Theta_j, \Theta_j)_E-\sum_{j=1}^n
(F'(u(t))\Theta_j, \Theta_j)_E=\\=
-\sum_{j=1}^n(\Lambda_\varepsilon\Theta_j, \Theta_j)_E
-\sum_{j=1}^n\int_\Omega  \langle f'(u(t,x))\varphi_j(x),\psi_j(x)\rangle\,dx\le\\
\le
-\frac\varepsilon 2n+
\sum_{j=1}^n\int_\Omega  |f'(u(t,x))|\,|\varphi_j(x)|\,|\psi_j(x)|\,dx\le\\\le
-\frac\varepsilon 2n+
\int_\Omega  |f'(u(t,x))|\rho_\varphi(x)^{1/2}\rho_\psi(x)^{1/2}\,dx,
\endaligned
\end{equation}
 where $\langle\cdot,\cdot\rangle$  denotes the scalar product in
 $\mathbb R^N$, and where we set
$$
\rho_\varphi(x):=\sum_{j=1}^n|\varphi_j(x)|^2,
\quad \rho_\psi(x):=\sum_{j=1}^n|\psi_j(x)|^2.
$$
Since $\Theta_j$'s, $\Theta_i=(\varphi_i,\psi_i)$, are orthonormal,
both systems $\{\nabla\varphi_j\}_{j=1}^n$ and $\{\psi_j\}_{j=1}^n$
are suborthonormal in $L_2$ and therefore for all $d\ge1$
\begin{equation}\label{rhopsi}
\|\rho_\psi\|_{L_1}\le n,
\end{equation}
while the estimates for $\rho_\varphi$ will depend on the dimension $d$.

We use  in~\eqref{anyd} H\"older's inequality
with exponents $d$, $2d/(d-2)$, $2$
 to obtain
\begin{equation}
\label{2}
\sum_{j=1}^n(\mathcal L(t) \Theta_j, \Theta_j)_E
\le
-\frac\varepsilon 2n+B_d\|\rho_\varphi\|_{L_{d/(d-2)}}^{1/2}\|\rho_\psi\|_{L_1}^{1/2}.
\end{equation}
We have \eqref{rhopsi} as before,
while Theorem~\ref{T:Lieb_suborth} gives that
$$
\|\rho\|_{L_{d/(d-2)}}\le (NL_{0,d})^{2/d}\frac d{d-2}n^{(d-2)/d}.
$$
The right-hand side in~\eqref{2} is independent of $t$ and of the
particular trajectory on the attractor. Therefore $q(n)$ satisfies
the same upper bound.
We obtain, as a result, that
$$
q(n)\le
-\frac\varepsilon 2n+B_d(d/({d-2}))^{1/2}(NL_{0,d})^{1/d}\cdot n^{1-1/d}.
$$
Since  $\gamma$ is small, we can
set $\varepsilon=\gamma/4$ here.
Then the number $n^*= N\gamma^{-d}{\mathrm c_d} B_d^d$ is
such that $q(n)<0$ for $n>n^*$ and therefore $n^*$ is an upper bound
both for the
Hausdorff \cite{BV,T} and the fractal \cite{Ch-I2001,Ch-I,Ch-V-book} dimension
of the global
attractor~$\mathscr A$. The proof is complete.
\end{proof}
\begin{remark} Analyzing the proof of the main theorem,
we see that the estimate for the dimension can be improved
by replacing the quantity $B_d$ by a weaker one:
$$
\widetilde B_d:=\liminf_{t\to\infty}\frac1t
\sup_{\xi_0\in\mathscr A}\int_0^t\|f'(u(t))\|_{L_d}\,dt,
$$
 where $\{u(t),\Dt u(t)\}:=S(t)\xi_0$.
The difference is that this quantity can be estimated
using the proper Strichartz norms. For instance, for
the case $d=3$, it is controlled by the Strichartz norm
$u\in L_4(T,T+1;L_{12})$ on the attractor $\mathscr A$ if
 we have no more than the quintic growth rate of $f$:
$$
|f'(u)|\le C(1+|u|^p),\ \ p\le 4.
$$
Thus, the descibed scheme works and gives the same result,
 not only for the qubic nonlinearities $f$, but also for
 the quintic ones if the control of the $L_4(L_{12})$-norm
 is known. Unfortunately, for the non-gradient quintic case,
this bound  is known nowadays for the periodic boundary conditions only,
 see \cite{SZ} (see also \cite{KSZ} for the quintic gradient case).
  On the other hand, for the sub-quintic case $p<4$, this control
  follows in a straightforward way by perturbation arguments, see
  \cite{KSZ}. Thus, our upper bounds remain valid for the sub-quintic
  case with Dirichlet/Neumann/periodic BC as well as for the critical
  quintic case with periodic BC.
\end{remark}

\section{The lower dimensional case}\label{S:sec4}
In this section, we consider the changes which should be made in
the general scheme in order to cover the cases $d=1$ and $d=2$. For this purpose
we set
\begin{equation}\label{B1B2}
\aligned
&B_1:=\sup_{\xi_0\in\mathscr A}  \|f'(u)\|_{L_\infty(\Omega)},\quad d=1,\\
&B_2:=\limsup_{t\to\infty}\sup_{\xi_0\in\mathscr A}
\frac1t\int_0^t\|f'(u(\cdot,\tau)\|_{L_\infty(\Omega)}\,d\tau,\quad d=2.
\endaligned
\end{equation}

We point out that we clearly have that $B_1<\infty$,
while the similar fact for $B_2$ will follow from
Proposition~\ref{P:Strichartz}. For the moment we may assume
that for some  fixed $t$ we have $\|f'(\cdot,t)\|_{L_\infty}<\infty$
and therefore both for $d=1$ and $d=2$ we can write for the key term in~\eqref{anyd}
\begin{equation}\label{anyd12}
\aligned
\int_\Omega  \langle f'(u(t,x))\varphi_j(x),\psi_j(x)\rangle\,dx=
\int_\Omega  \langle \varphi_j(x),f'(u(t,x))^T\psi_j(x)\rangle\,dx\\=
\left(\varphi_j,f'(u(t,\cdot))^T\psi_j\right)_{L_2}=
\left(A^{1/4}\varphi_j,A^{-1/4}f'(u(t,\cdot))^T\psi_j\right)_{L_2}\\\le
\frac\nu 2(A^{1/2}\varphi_j,\varphi_j)+
\frac1{2\nu}([f'(u(t,\cdot))A^{-1/2}f'(u(t,\cdot))^T]\psi_j,\psi_j),
\endaligned
\end{equation}
where we set $A:=-\bm\Delta^D_{\Omega}$ and $\nu>0$ is arbitrary.
Here  $-\bm\Delta^D_{\Omega}$ is the Dirichlet Laplacian
acting independently on $N$-vectors and $\bm\lambda_j$ are its eigenvalues
(see Section~\ref{S:sec7}).
Now Proposition~\ref{P:d12} gives for the first term
$$
\sum_{j=1}^n(A^{1/2}\varphi_j,\varphi_j)\le\sum_{j=1}^n\bm\lambda_j^{-1/2}.
$$
We consider the second term. The nonzero eigenvalues of the compact
self-adjoint operator $f'A^{-1/2}{f'}^T$ are the same as the eigenvalues of the
operator $A^{-1/4}{f'}^Tf'A^{-1/4}$ with quadratic form
$$
\left(A^{-1/4}{f'}^Tf'A^{-1/4}\psi,\psi\right)=
\left(f'A^{-1/4}\psi,f'A^{-1/4}\psi\right)\le\|f'\|_{L_\infty}^2(A^{-1/2}\psi,\psi).
$$
Therefore by the variational principle and Lemma~\ref{L:sub} we obtain
$$
\sum_{j=1}^n([f'(u(t,\cdot))A^{-1/2}f'(u(t,\cdot))^T]\psi_j,\psi_j)\le
\|f'(u(t)\|^2_{L_\infty}
\sum_{j=1}^n\bm\lambda_j^{-1/2}.
$$

Combining the above and optimizing in $\nu$ we obtain for the $n$-trace
the estimate (holding both for $d=1$ and $d=2$)
\begin{equation}\label{anydddd}
\sum_{j=1}^n(\mathcal L_\eb(u(t)) \Theta_j, \Theta_j)_E\le
-\frac\varepsilon 2n+\|f'(u(t)\|_{L_\infty}
\sum_{j=1}^n\bm\lambda_j^{-1/2}.
\end{equation}

This completes the preparatory work for the case $d=1$.

We now turn to the case $d=2$.
In this case we do not have the embedding
$H^1\subset C$, so only the energy norm is not enough
 for the control of the quantity $B_2$. Moreover,
 we need some  growth restrictions
on the gradient part $\nabla F_0(u)$ of the nonlinearity $f$.
For simplicity, we assume that
\begin{equation}\label{cond2d}
|\nabla^3F_0(u)|\le C(1+|u|^p),\ \ u\in\R^N
\end{equation}
where $p$ may be arbitrarily large although some exponentially
growing functions $f$ may also be included,
see, for example, \cite{Strichartz}.

As in the 1D case, let conditions
\eqref{condfb}--\eqref{22}  hold. The key
tool which allows to overcome the problem here
will be the following Strichartz type estimate for the linear wave equation
proved in~\cite{Strichartz}.

\begin{proposition}\label{P:Strichartz} Let $u$ be an energy solution of the following
linear problem:
$$
\Dt^2 u-\Delta u=g(t),\ \ u\big|_{\partial\Omega}=0,\ \xi_u\big|_{t=0}=\xi_0
$$
and let $g\in L_1(0,T;L_2)$, $\xi_0\in E$ and let $\Omega\subset \mathbb R^2$ be
 a bounded smooth domain. Then the following Strichartz estimate holds:
$$
\|u\|_{L_8(0,T;C^{1/8}(\bar\Omega))}\le C_T(\|\xi_0\|_{E}+\|g\|_{L_1(0,T;L_2(\Omega))})
$$
where the constant $C_T$ depends on $\Omega$ and $T$.
\end{proposition}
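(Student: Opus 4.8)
The plan is to obtain this as the \emph{scaling-critical} Strichartz estimate for the Dirichlet half-wave group on the planar domain $\Omega$, reducing the statement by standard functional-calculus, finite-speed-of-propagation and Littlewood--Paley arguments to a single genuine analytic input: the sharp, loss-free, fixed-frequency Strichartz estimate for the wave equation on a smooth bounded domain of $\R^2$. \emph{Step 1 (reduction to the homogeneous equation).} Set $A:=-\Delta^D_\Omega$, so that $u(t)=\cos(t\sqrt A)u_0+\frac{\sin(t\sqrt A)}{\sqrt A}u_1+\int_0^t\frac{\sin((t-\tau)\sqrt A)}{\sqrt A}g(\tau)\,d\tau$. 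Since the target norm is Banach-valued in $x$ and $L_8$ (hence, on the bounded interval $[0,T]$, also $L_1$) in $t$, Minkowski's integral inequality bounds the Duhamel term by $\int_0^T\|\frac{\sin((t-\tau)\sqrt A)}{\sqrt A}g(\tau)\|_{L_8(0,T;C^{1/8})}\,d\tau$, i.e. by the homogeneous estimate applied to the data $\{0,g(\tau)\}$---whose $E$-norm is $\|g(\tau)\|_{L_2}$---integrated in $\tau$. As $A^{-1/2}$ is an isometry from $L_2$ onto $H^1_0(\Omega)$ equipped with the gradient norm, both homogeneous half-wave pieces are controlled once one proves
$$
\|e^{it\sqrt A}h\|_{L_8(0,T;C^{1/8}(\bar\Omega))}\le C_T\,\|h\|_{H^1(\Omega)}.
$$

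\emph{Step 2 (scaling and dyadic reduction).} On $\R^2$ the quantities $\|h\|_{\dot H^1}$ and $\|e^{it\sqrt{-\Delta}}h\|_{L_p(\R;\,C^\alpha(\R^2))}$ have the same homogeneity under $(t,x)\mapsto(\lambda t,\lambda x)$ precisely when $p=8$, $\alpha=1/8$; this identifies the estimate as critical and fixes the exponents. Decompose $h=\sum_{j\ge0}h_j$ with $h_j:=\chi_j(\sqrt A)h$ a dyadic Littlewood--Paley partition built from the functional calculus of $A$, and set $u_j:=e^{it\sqrt A}h_j$. Using the characterizations $\|w\|_{C^{1/8}(\bar\Omega)}\approx\sup_j2^{j/8}\|\chi_j(\sqrt A)w\|_{L_\infty}$ (valid for the Dirichlet calculus since $0<1/8<1$) and $\|h\|_{H^1}^2\approx\sum_j2^{2j}\|h_j\|_{L_2}^2$, together with the elementary inequality $\sum_ja_j^8\le(\sum_ja_j^2)^4$, the estimate of Step~1 reduces to the single-frequency bound
$$
\|u_j\|_{L_8(0,T;L_\infty(\Omega))}\le C_T\,2^{7j/8}\|h_j\|_{L_2(\Omega)},\qquad j\ge1,
$$
the block $j=0$ being trivial by elliptic regularity and $|\Omega|<\infty$; indeed this yields $\|u\|_{L_8(0,T;C^{1/8})}^8\lesssim\sum_j2^j\|u_j\|_{L_8L_\infty}^8\lesssim\sum_j(2^j\|h_j\|_{L_2})^8\le\|h\|_{H^1}^8$, with no room to spare---the hallmark of a critical estimate.

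\emph{Step 3 (the fixed-frequency estimate).} The displayed block bound is the loss-free Strichartz estimate for the Dirichlet wave equation at the pair $(8,\infty)$, which is admissible at a single frequency; equivalently, by Bernstein in $x$ one may replace $L_\infty$ by a finite $L_q$ and the pair $(8,q)$ carrying $2/q+1/8$ derivatives, which on $\R^2$ is classical. To transfer it to $\Omega$ I would localize. Cover $\bar\Omega$ by finitely many charts. In interior charts, combine a smooth spatial cutoff with finite speed of propagation (speed $1$) to see that on a short time interval $u_j$ coincides, up to $O(2^{-Nj})$ smoothing errors, with a free $\R^2$-wave, for which the block estimate with exponent $7/8$ is the classical Strichartz bound (from the $\R^2$ dispersive estimate and $TT^*$, or from the Besov form of Keel--Tao followed by Sobolev embedding); then iterate over $[0,T]$ in $O(T)$ steps. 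In boundary charts one straightens $\partial\Omega$ and uses that in bounded time only finitely many reflections occur: the Dirichlet half-wave propagator splits into a ``direct'' part, treated as in the interior, and a ``reflected'' part which, by the Melrose--Taylor / Friedlander parametrix for the Dirichlet problem on a smooth, possibly curved, boundary, is again a Fourier integral operator of the same class obeying the same fixed-frequency bounds away from the glancing set, the glancing contribution being controlled by Airy-type stationary phase. Summing the finitely many charts and reflections gives the block estimate on $\Omega$; feeding it into Step~2 and then Step~1 yields the Proposition, with $C_T$ polynomial in $T$ and depending on $\Omega$ through the covering and the parametrix constants.

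The main obstacle is Step~3 near the boundary. Since the estimate is scaling-critical there is no derivative to spare, so a soft argument---interpolating the energy bound against a subcritical Strichartz estimate---would only deliver $C^{1/8-\eb}$, or $C^{1/8}$ at the cost of $\eb$ extra derivatives on the initial data, both too weak for the intended applications. One genuinely needs the loss-free Dirichlet Strichartz estimate, which is available in space dimensions $2$ and $3$ but fails from dimension $4$ on; here the two-dimensionality and the smoothness of $\partial\Omega$ are both essential, and this is the point at which one invokes the analysis behind \cite{Strichartz} (or the Blair--Smith--Sogge theory of Strichartz estimates on manifolds with boundary).
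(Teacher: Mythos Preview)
The paper does not prove this proposition: it is quoted as a known result and attributed to \cite{Strichartz} (Ibrahim--Jrad). There is therefore no ``paper's own proof'' to compare against; your proposal is an independent sketch of how such an estimate is established.

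Your reduction scheme---Duhamel plus Minkowski to reduce to the homogeneous half-wave group, then a dyadic Littlewood--Paley decomposition in the Dirichlet functional calculus to reduce to a loss-free single-frequency $L^8_tL^\infty_x$ bound---is the standard route, and you correctly isolate the one genuinely hard ingredient: the scaling-critical, loss-free fixed-frequency Strichartz estimate for the Dirichlet wave propagator near a curved boundary. You also correctly note that this is exactly the content supplied by \cite{Strichartz} (resting on the Blair--Smith--Sogge machinery), so in the end your argument terminates at the same black box the paper simply cites. In that sense your proposal is less an alternative proof than an unpacking of what lies behind the citation.

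One technical point that would need more care in a complete write-up: the equivalence $\|w\|_{C^{1/8}(\bar\Omega)}\approx\sup_j 2^{j/8}\|\chi_j(\sqrt A)w\|_{L_\infty}$ for the Dirichlet spectral calculus is not entirely routine near $\partial\Omega$ (the Dirichlet projectors do not commute with extension/restriction, and the heat-kernel or Davies--Gaffney bounds that underpin such characterizations require some attention at the boundary). This is handled in the literature, but it is a place where a reader might ask for a reference or an argument.
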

Recall that we have the Sobolev embedding $H^1\subset L_q$ for any
finite $q$, so we have the control of the $L_1(L_2)$-norm of $f(u)$
 via the energy norm:
$$
\|f(u)\|_{L_1(T,T+1;L_2)}\le C(1+\|\xi_u\|_{L_\infty(T,T+1;E)})^{p+2}.
$$
Combining this with the dissipative energy estimate
$\partial_tu\in  L_\infty(0,\infty;L_2)$, we get that
$$
\sup_{\xi_0\in\mathscr A}\|u\|_{L_8(T,T+1;L_\infty(\Omega))}\le
\sup_{\xi_0\in\mathscr A}\|u\|_{L_8(T,T+1;C^{1/8}(\bar\Omega))}\le C
$$
where $C$ is independent of $\gamma$, $T$ and  the the choice of the
initial data $\xi_0\in\mathscr A$.

Finally, using time averaging and H\"older's inequality  we see that the quantity $B_2$ in \eqref{B1B2}
is bounded by the same constant $C$.

Thus, we get the following analogue of Theorem \ref{T:main-d}
for $d=1,2$.
\begin{theorem}\label{frac-low}
Let conditions \eqref{condfb}--\eqref{22} hold, and, in addition, let
condition \eqref{cond2d}
hold for $d=2$.  Then, the fractal
dimension of the attractor is finite and possesses the
following upper bounds.

If $d=1$ then
$$
\dim_F\mathscr A\le n^*,
$$
where $n^*$ is the unique root of the equation
$$
n=N\frac8\pi\frac1\gamma\ell B_1\ln(en),
$$
which satisfies
\begin{equation}\label{d1est}
\dim_F\mathscr A\le n^*\le
N\frac{16}\pi\frac1\gamma\ell B_1\ln\left(N\frac{8}\pi\frac1\gamma\ell B_1\right),
\end{equation}

For $n=2$
\begin{equation}\label{d2est}
\dim_F\mathscr A\le N\frac{2^7}\pi\frac1{\gamma^2}|\Omega|B_2^2.
\end{equation}
\end{theorem}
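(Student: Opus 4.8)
The plan is to continue directly from the trace estimate \eqref{anydddd}, which was established uniformly over the attractor for both $d=1$ and $d=2$, and to feed in the eigenvalue asymptotics of the Dirichlet Laplacian on $\Omega\subset\mathbb R^d$ together with the appropriate bound on $\|f'(u)\|_{L_\infty}$. The core quantity to control is the partial sum $\sum_{j=1}^n \bm\lambda_j^{-1/2}$. For $d=1$ on an interval of length $\ell$, the eigenvalues of $-\Delta^D$ (scalar) are $(\pi k/\ell)^2$, so $\bm\lambda_j^{-1/2}$ is of the form $\ell/(\pi k)$ (with each eigenvalue repeated $N$ times for the vector Laplacian), and hence $\sum_{j=1}^n \bm\lambda_j^{-1/2} \le \tfrac{N\ell}{\pi}\sum_{k=1}^{\lceil n/N\rceil} \tfrac1k \le \tfrac{N\ell}{\pi}\ln(en)$ after the usual harmonic-sum estimate; this is where the logarithmic factor enters, and it is the term that makes the 1D bound implicit. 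For $d=2$ I would instead use the Berezin--Li--Yau / Weyl-type lower bound $\sum_{j=1}^n \bm\lambda_j \ge c\, n^2/(N|\Omega|)$, or more directly the counting-function bound $\#\{j:\bm\lambda_j\le\lambda\}\le \tfrac{N|\Omega|}{4\pi}\lambda$, which after a short computation yields $\sum_{j=1}^n \bm\lambda_j^{-1/2}\le 2\big(\tfrac{N|\Omega|}{4\pi}\big)^{1/2} n^{1/2}$, i.e.\ of order $n^{1/2}$ with the explicit constant needed for \eqref{d2est}.

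For $d=1$ the nonlinearity term is handled at fixed time: $B_1=\sup_{\mathscr A}\|f'(u)\|_{L_\infty}<\infty$ because $H^1_0\subset C(\bar\Omega)$ in one dimension and the attractor is bounded in $E$ uniformly in $\gamma$, so no growth restriction on $f$ is needed. Setting $\varepsilon=\gamma/4$ in \eqref{anydddd} and taking the $\liminf$ time-average in \eqref{q(n)} (the right-hand side being time-independent), we get $q(n)\le -\tfrac\gamma8 n + B_1\tfrac{N\ell}{\pi}\ln(en)$; the condition $q(n)<0$ then reads $n> N\tfrac8\pi\gamma^{-1}\ell B_1\ln(en)$, and the fact that the right-hand side is concave and sublinear in $n$ guarantees a unique root $n^*$, which bounds $\dim_F\mathscr A$. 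The explicit upper bound \eqref{d1est} for $n^*$ follows from a one-line fixed-point/monotonicity estimate: substituting the leading guess $n\approx C\gamma^{-1}$ into the logarithm and doubling to absorb the $\ln\ln$ correction.

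For $d=2$ the subtlety is that $H^1\not\subset L_\infty$, so $B_2$ cannot be controlled by the energy norm alone; here is where Proposition \ref{P:Strichartz} is essential. The chain already laid out in the excerpt — bound $\|f(u)\|_{L_1(T,T+1;L_2)}$ by a power of the energy norm using $H^1\subset L_q$, apply the Strichartz estimate to get $u\in L_8(T,T+1;C^{1/8})\subset L_8(T,T+1;L_\infty)$ uniformly in $\gamma,T$ and $\xi_0\in\mathscr A$, then pass to the time average — shows $B_2\le C$ with $C$ independent of $\gamma$. Feeding this and $\varepsilon=\gamma/4$ into \eqref{anydddd}, time-averaging, and using $\sum_{j=1}^n\bm\lambda_j^{-1/2}\le 2(N|\Omega|/4\pi)^{1/2}n^{1/2}$ gives $q(n)\le -\tfrac\gamma8 n + C\cdot 2(N|\Omega|/4\pi)^{1/2} n^{1/2}$, which is negative once $n> N\tfrac{2^7}{\pi}\gamma^{-2}|\Omega| B_2^2$; bookkeeping the constants (squaring, and tracking the factor $8$ from $\varepsilon=\gamma/4$ through $-\tfrac\varepsilon2 n=-\tfrac\gamma8 n$) produces exactly \eqref{d2est}. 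The main obstacle is the $d=2$ case: all the real work is in justifying that the Strichartz estimate of Proposition \ref{P:Strichartz} can be applied to the nonlinear equation on the attractor with constants uniform in $\gamma\to0^+$ — in particular that treating $f(u)$ as the forcing $g(t)$ is legitimate and that the growth condition \eqref{cond2d} is what makes $\|f(u)\|_{L_1(L_2)}$ finite — whereas the eigenvalue sums and the final arithmetic are routine.
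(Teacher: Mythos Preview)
Your proposal is correct and follows essentially the same route as the paper: start from the trace estimate \eqref{anydddd}, set $\varepsilon=\gamma/4$, bound $\sum_{j=1}^n\bm\lambda_j^{-1/2}$ by $\tfrac{N\ell}{\pi}\ln(en)$ for $d=1$ and by $2\bigl(\tfrac{N|\Omega|}{2\pi}\bigr)^{1/2}n^{1/2}$ for $d=2$, and solve $q(n)<0$; the paper's Proposition~\ref{P:d12} packages exactly these eigenvalue sums. One small correction: the counting-function bound $\#\{j:\bm\lambda_j\le\lambda\}\le \tfrac{N|\Omega|}{4\pi}\lambda$ you invoke is P\'olya's conjecture and is not known for general domains; the paper (and your own first suggestion) uses the Li--Yau bound $\bm\lambda_j\ge\tfrac{2\pi}{N|\Omega|}\,j$, which is what actually produces the constant $2^7$ in \eqref{d2est} --- with the $4\pi$ version you would get $2^6$, not ``exactly \eqref{d2est}''.
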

\begin{proof}
Setting $\varepsilon:=\gamma/4$ in \eqref{anydddd} and using~\eqref{d1}
for $d=1$ we obtain the following bound
for the numbers $q(n)$:
$$
q(n)\le-\frac{n\gamma}8+ B_1\sum_{j=1}^n\bm\lambda_j^{-1/2}\le
-\frac{n\gamma}8+ B_1\frac{N\ell}\pi\ln en,
$$
which gives \eqref{d1est}, since the root of the equation $n=A\ln en$
for a large $A$ satisfies $n\le2A\ln A$.

Accordingly, for $d=2$ we have
$$
q(n)\le-\frac{n\gamma}8+ B_2\sum_{j=1}^n\bm\lambda_j^{-1/2}\le
-\frac{n\gamma}8+B_2\left(\frac{N|\Omega|}{2\pi}\right)^{1/2}2\sqrt n,
$$
which gives \eqref{d2est}.

\end{proof}

\begin{remark}\label{R:d1}
For $d=1$ there exists an elementary proof of
non-optimal (of order $\gamma^{-2}$) estimate for the dimension
\begin{equation}
\dim_F\mathscr A\le N\frac{16}{\gamma^2}\,\ell B_1^2
\label{d1simple},
\end{equation}

 In fact, arguing as in \eqref{anyd}
 and using
estimate \eqref{Nd1}, we obtain
$$
q(n)\le-\frac\gamma 8n+B_1(N\ell/4)^{1/2}\sqrt n,
$$
which gives \eqref{d1simple}.

\end{remark}

 \setcounter{equation}{0}
\section{Lower bound for the dimension of the attractor}\label{S:sec5}
 In this section we obtain  lower bounds for the dimension of the attractor.
We prove these estimates  for the system~\eqref{hyp}
where $\Omega\subset \mathbb R^d$, $d\ge1$ and
$u =(u_1,\dots,u_N)$, $N\ge2$. As before, the nonlinear vector function
$f(u)$ is of the form \eqref{f(u)}  satisfying \eqref{condfb}--\eqref{22}.
Next,  we just set $g=0$.

\begin{theorem}\label{T:lower}
Let in the system~\eqref{hyp} $ g=0$, $f(0)=0$,
 and let the $N\times N$ matrix $f'(0)$
 have a complex eigenvalue $a+ib$.
 Then the following  lower bound for the dimension holds
\begin{equation*}\label{lbd}
 \dim_F\mathcal A\ge\frac C{\gamma^d},
\end{equation*}
 where $C$ is independent of $\gamma$.
\end{theorem}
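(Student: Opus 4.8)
The plan is to exhibit a single equilibrium of \eqref{hyp} whose instability index grows like $\gamma^{-d}$, and then invoke the standard fact recalled at the end of Section~\ref{S:sec2} that the unstable manifold of an equilibrium lies in the attractor, so that $\dim_F\mathscr A\ge\operatorname{ind}_+(\xi_0)$. Since $g=0$ and $f(0)=0$, the point $\xi_0=\{0,0\}$ is an equilibrium. Linearizing \eqref{hyp} at $u=0$ gives the constant-coefficient system $\partial_t^2v+\gamma\partial_tv-\Delta v+f'(0)v=0$ with Dirichlet data. First I would diagonalize in the spatial variable: expand $v$ in the Dirichlet eigenbasis $\{e_k\}$ of $-\Delta^D_\Omega$ with eigenvalues $0<\lambda_1\le\lambda_2\le\dots$, so that on each mode the linearized equation reduces to the finite-dimensional ODE system $\ddot{w}+\gamma\dot w+(\lambda_k I+f'(0))w=0$ in $\mathbb R^N$.

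The next step is to count unstable directions of this ODE system as $k$ varies. Since $f'(0)$ has a complex eigenvalue $a+ib$ with $b\neq0$, in the corresponding (complexified) eigendirection the scalar equation is $\ddot z+\gamma\dot z+(\lambda_k+a+ib)z=0$, whose characteristic roots are $\mu=\tfrac12\bigl(-\gamma\pm\sqrt{\gamma^2-4(\lambda_k+a+ib)}\bigr)$. The key point is that for the branch with $+$ sign one computes $\operatorname{Re}\mu>0$ precisely when the imaginary part $b$ of the potential is nonzero and $\lambda_k$ is large: more concretely, writing $\sqrt{\gamma^2-4\lambda_k-4a-4ib}$ for large $\lambda_k$ as $\approx 2\sqrt{\lambda_k}\,i\bigl(1+O(1/\lambda_k)\bigr)$ and carrying the correction, the real part of the root behaves like $\tfrac12(-\gamma+b/\sqrt{\lambda_k}+\dots)$ — wait, the scaling must be redone carefully: the precise threshold is that $\operatorname{Re}\mu>0$ holds for all $k$ with $\lambda_k\lesssim (b/\gamma)^2$, i.e. for all modes below a spectral cutoff of order $\gamma^{-2}$. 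Each such mode contributes at least one unstable direction (from the complex pair in $f'(0)$), so $\operatorname{ind}_+(\{0,0\})\ge N(\lambda_j\le c\,\gamma^{-2})$ for a suitable constant $c=c(a,b)>0$.

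The final step is Weyl asymptotics: the number of Dirichlet eigenvalues $\lambda_j$ of $-\Delta^D_\Omega$ below a level $\Lambda$ is asymptotically $(2\pi)^{-d}\omega_d|\Omega|\,\Lambda^{d/2}$, hence taking $\Lambda=c\,\gamma^{-2}$ yields a count of order $\gamma^{-d}$. Combining, $\dim_F\mathscr A\ge\#\{j:\lambda_j\le c\gamma^{-2}\}\ge C\gamma^{-d}$ with $C$ independent of $\gamma$, which is the assertion. One should also note that the small non-gradient perturbation $f_\gamma$ only shifts $f'(0)$ by an $O(K)$-bounded (in fact its $\gamma$-dependence is harmless) matrix, and since the genuine complex eigenvalue $a+ib$ with $b\neq0$ is assumed for the full $f'(0)$, nothing more is needed; if one prefers to state the hypothesis on $\Phi'(0)$ alone, a perturbation argument handles the $f_\gamma'$ term since the relevant gap in the spectral counting is robust.

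\textbf{Main obstacle.} The delicate part is the second step: getting the sharp spectral threshold $\lambda_k\sim\gamma^{-2}$ for instability, including the constant, from the characteristic equation of the $2\times2$ (complex) block. One must track the real part of $\mu=\tfrac12(-\gamma+\sqrt{\gamma^2-4\lambda_k-4a-4ib})$ uniformly in the regime $\lambda_k\to\infty$, $\gamma\to0$ with $\gamma^2\lambda_k$ of order one, and verify that the real part is genuinely positive on a set of $\lambda_k$ of the claimed size (rather than merely non-negative, or positive only on a thinner set). This is an elementary but careful estimate on the square root of a complex number; once it is in place, the Weyl law and the unstable-manifold inclusion finish the proof immediately.
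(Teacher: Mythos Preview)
Your proposal is correct and follows essentially the same route as the paper: linearize at the trivial equilibrium, separate variables using the Dirichlet eigenfunctions and the complex eigenvector of $f'(0)$, expand the root $\mu=\tfrac12(-\gamma+\sqrt{\gamma^2-4\lambda_k-4a-4ib})$ for large $\lambda_k$ to find $\operatorname{Re}\mu\approx b/(2\sqrt{\lambda_k})-\gamma/2$, and then count the modes with $\lambda_k<(b/\gamma)^2$ via Weyl. The paper adds one small remark you omit: since $f'(0)$ is a real matrix, both $a\pm ib$ occur, so one may assume $b>0$ at the outset; otherwise your argument is the same, and your identified ``main obstacle'' is exactly the asymptotic expansion the paper carries out.
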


\begin{proof} The global attractor is strictly invariant
and therefore is a section at
$t=0$ (or at any $t=t_0$) of the set of complete
trajectories bounded for $t\in\mathbb R$. The solutions starting from
the unstable manifold of a stationary solution are obviously
bounded for $t\in\mathbb R$. Hence, the dimension of the
unstable manifold is always a lower bound for the dimension of the global
attractor.

Let us consider the trivial stationary solution  $u=0$
and the corresponding linearized system:
  $$
  \partial_t^2v+\gamma \partial_tv-\Delta v+f'(0)v=0.
  $$
Corresponding to the complex eigenvalue  $a+ib$ of the matrix $f'(0)$
 is the (constant) vector $V$.
We shall seek the solutions of the linearized system in the form
$$
v(t,x)=e^{\mu t}\varphi_n(x)V,
$$
where $\{\varphi_n\}_{n=1}^\infty$
are the eigenfunctions of the Dirichlet problem
$$
-\Delta\varphi_n=\lambda_n\varphi_n,
\qquad\varphi_n\vert_{\partial\Omega}=0.
$$
We obtain for each  $n$ the equation
$$
\mu^2+\gamma\mu+\omega^2+a+ib=0,
$$
where $\omega^2:=\lambda_n$ (and the similar equation with $a-ib$).
Then
\begin{multline*}
\mu_{\pm}=-\frac\gamma 2\pm
  \sqrt{\frac{\gamma^2}4-(a+ib+\omega^2)}=
  -\frac\gamma 2\pm\sqrt{-\omega^2
  \left(1+\frac{4a-\gamma^2}{4\omega^2}+i\frac b{\omega^2}\right)}=\\
  =-\frac\gamma 2\pm i\omega\sqrt{1+\frac{4a-\gamma^2}{4\omega^2}+
  i\frac b{\omega^2}}=-\frac\gamma 2\mp\frac b{2\omega}\pm i
  \left(\omega+\frac{4a-\gamma^2}{8\omega}\right)+
  O\left(\frac1{\omega^3}\right).
\end{multline*}

Choosing the sign of $b$ accordingly
(it is at our disposal, since the matrix is real-valued),
we see that for $\mu_+=\mu_+(n)$
it holds:
$$
\Re \mu_+=\frac b{2\omega}-\frac\gamma
 2+O\left(\frac1{\omega^3}\right),\quad b>0.
 $$
This gives that if  $\omega$ is sufficiently large, but
\begin{equation}\label{unstab}
\omega<\frac b\gamma\,,
\end{equation}
then  $\Re \mu>0$, and the corresponding solution
is growing exponentially. Finally, it follows from the
Weyl asymptotic formula that
$$
\omega_n^2=\lambda_n\sim\left(\frac{(2\pi)^d}{\omega_d|\Omega|}\right)^{2/d}n^{2/d},
\quad n\to \infty,
$$
where $\omega_d$ denotes the volume of the unit ball in $\mathbb R^d$.
This, in turn, implies that for
$\gamma\to0^+$
there exists at least $O(\gamma^{-d})$ numbers $n$,
for which  \eqref{unstab} holds. The proof is complete.
\end{proof}

\begin{remark}\label{R:Lower-bound}
{\rm
This approach for obtaining order sharp lower bounds
does not (and should not) work in the gradient case.
In fact, if $f_\gamma(u)=0$ and  $f=\nabla_uF_0$, then the Hessian matrix
$f'(u)=\nabla^2F_0(u)=\{\partial^2F_0/\partial u_i\partial u_j\}$
is symmetric and hence cannot have a complex eigenvalue.

Moreover, it is clear that we  can always find a $\gamma$-small
vector function  $f_\gamma(u)$ with matrix of derivatives
$f'_\gamma(u)$ of order $O(1)$, which produces a complex eigenvalue
for the sum
$$
\nabla^2F_0(u)+f'_\gamma(u)
$$
at $u=0$.
 For instance, we may take $N=2$, $\Phi(u)\equiv0$ and
$$
f_\gamma(u)=\gamma\(\sin(u_2/\gamma),-\sin(u_1/\gamma)\).
$$
}
\end{remark}

Thus, we have the following result.

\begin{corollary}\label{Cor:low}
Let  the nonlinearity $f$ be chosen as in Remark \ref{R:Lower-bound}.
 Then the corresponding attractor
 possesses the following lower bound for the dimension:
$$
\dim_F\mathscr A\ge C_d\gamma^{-d}.
$$
where the constant  $C_d$ is uniform with respect to $\gamma\to0^+$.
\end{corollary}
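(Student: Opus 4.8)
The plan is to treat Corollary~\ref{Cor:low} as a direct specialization of Theorem~\ref{T:lower}. The only thing to do is to check that the explicit nonlinearity exhibited in Remark~\ref{R:Lower-bound} satisfies all the standing hypotheses \eqref{f(u)}--\eqref{33} with structure constants \emph{independent} of $\gamma$, and that it also meets the two extra requirements of Theorem~\ref{T:lower}, namely $g=f(0)=0$ and $f'(0)$ possessing a genuinely complex eigenvalue $a+ib$. Once this is verified, the quantitative count of unstable modes in the proof of Theorem~\ref{T:lower} yields $\dim_F\mathscr A\ge C_d\gamma^{-d}$, and the uniformity of $C_d$ in $\gamma$ is automatic because that eigenvalue --- hence the cut-off $\omega<b/\gamma$ from~\eqref{unstab} --- is fixed once and for all.

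Concretely, I would take $N=2$, $\Phi\equiv0$ (so that $F_0\equiv0$) and $f(u)=f_\gamma(u)=\gamma\bigl(\sin(u_2/\gamma),\,-\sin(u_1/\gamma)\bigr)$, rescaling the amplitude by a harmless numerical factor if one insists on the literal bound $|f_\gamma(u)|\le\gamma$ in~\eqref{condfb}. Then \eqref{11}, \eqref{22} and the growth condition~\eqref{33} hold trivially because $\Phi$ and $F_0$ vanish, while $|f'_\gamma(u)|\le\sqrt2=:K$ uniformly in $u$ and $\gamma$, so Proposition~\ref{Prop1.sol} applies with $\gamma$-independent constants: the attractor $\mathscr A=\mathscr A_\gamma$ exists, is bounded in $E$ uniformly as $\gamma\to0^+$, the semigroup is $C^1$-smooth (since $f'_\gamma$ is bounded), and $u\equiv0$ is an equilibrium on $\mathscr A$ because $f_\gamma(0)=0$. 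Computing the Jacobian at the origin,
$$
f'(0)=f'_\gamma(0)=\begin{pmatrix} 0 & 1 \\ -1 & 0 \end{pmatrix},
$$
whose eigenvalues are $\pm i$; thus $a=0$, $b=1$, and all hypotheses of Theorem~\ref{T:lower} are met.

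It then remains to apply Theorem~\ref{T:lower} and track constants. Following its proof, the unstable subspace of the linearization at $0$ contains every mode $e^{\mu_+(n)t}\varphi_n(x)V$ with $\Re\mu_+(n)>0$, and by the expansion of $\mu_\pm$ given there this happens as soon as $\omega_n=\sqrt{\lambda_n}$ exceeds a fixed threshold $\omega_0$ while $\omega_n<b/\gamma=1/\gamma$. Hence, up to the bounded correction coming from the finitely many eigenvalues below $\omega_0$, the instability index at $0$ equals
$$
\#\{\,n:\lambda_n<1/\gamma^2\,\}=\frac{\omega_d|\Omega|}{(2\pi)^d}\,\gamma^{-d}\bigl(1+o(1)\bigr),\qquad\gamma\to0^+,
$$
by Weyl's law, and since the unstable manifold of an equilibrium lies in $\mathscr A$ this gives $\dim_F\mathscr A\ge C_d\gamma^{-d}$ with $C_d$ depending only on $d$ and $|\Omega|$. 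The single point that requires a line of care --- and the only thing I would call an obstacle --- is precisely this uniformity: one must make sure that the threshold $\omega_0$ (produced by the ``$\omega$ sufficiently large'' clause and the $O(\omega^{-3})$ remainder in the expansion of $\mu_\pm$) can be chosen independently of $\gamma$. This is immediate here because $b=1$ is a fixed constant, so neither $\omega_0$ nor the remainder bound sees $\gamma$, and the finitely many excluded small modes are negligible against the $\gamma^{-d}$ growth. Note, finally, that the argument uses only \eqref{condfb}--\eqref{22} --- no bound on $f''_\gamma$ is needed --- so the corollary holds in every spatial dimension $d\ge1$.
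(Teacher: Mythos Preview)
Your proposal is correct and follows exactly the route the paper intends: the paper states Corollary~\ref{Cor:low} without an explicit proof, treating it as an immediate consequence of Theorem~\ref{T:lower} applied to the concrete nonlinearity from Remark~\ref{R:Lower-bound}, and you have carefully supplied all the verifications (conditions \eqref{condfb}--\eqref{33}, $f(0)=0$, the eigenvalues $\pm i$ of $f'(0)$, and the $\gamma$-uniformity of the threshold $\omega_0$) that make this application go through. Your treatment is in fact more detailed than the paper's own, and the one ``obstacle'' you flag---the $\gamma$-independence of the $O(\omega^{-3})$ remainder and of $\omega_0$---is handled correctly.
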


Combining Theorem~\ref{T:main-d}, Theorem~\ref{frac-low},
 and
the results of this section we can come to the following conclusion.

\begin{corollary}\label{Cor:upandlow}
The following upper and lower bounds
for the dimension of the attractor holds
$$
\aligned
d=1\quad \gamma^{-1}\preceq&\dim_F\mathscr A\preceq\gamma^{-1}\ln(\gamma^{-1}),\\
d\ge2\quad \gamma^{-d}\preceq &\dim_F\mathscr A\preceq \gamma^{-d}.
\endaligned
$$
As usual, the lower bounds hold for a specially chosen family of
functions $f(u)$.
\end{corollary}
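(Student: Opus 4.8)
The plan is to deduce the two-sided estimate purely by assembling results already established: the upper bounds of Sections~\ref{S:sec3}--\ref{S:sec4} and the lower bounds of the present section, after checking that one and the same nonlinearity realizes both sides simultaneously. For the upper bounds, when $d\ge3$ I would apply Theorem~\ref{T:main-d}, which gives $\dim_F\mathscr A\le N\mathrm c_d\gamma^{-d}B_d^d$; here the quantity $B_d=\sup_{\{u,\partial_tu\}\in\mathscr A}\|f'(u)\|_{L_d}$ is bounded uniformly as $\gamma\to0^+$ because, by Proposition~\ref{Prop1.sol}, $\mathscr A$ sits in a ball of $E$ whose radius $R_0$ is independent of $\gamma$, and then the growth bound~\eqref{33} together with the Sobolev embedding $H^1_0\hookrightarrow L_{2d/(d-2)}$ estimates $\|f'(u)\|_{L_d}$ in terms of $R_0$ alone. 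For $d=1$, Theorem~\ref{frac-low} produces a bound of order $\gamma^{-1}\ln(\gamma^{-1})$, with $B_1=\sup_{\mathscr A}\|f'(u)\|_{L_\infty}$ finite and $\gamma$-uniform thanks to $H^1_0\hookrightarrow C(\bar\Omega)$ and the uniform $E$-bound on the attractor. For $d=2$, Theorem~\ref{frac-low} gives $\dim_F\mathscr A\le N\tfrac{2^7}\pi\gamma^{-2}|\Omega|B_2^2$, and the only genuinely nontrivial ingredient is the $\gamma$-uniform bound on $B_2$, which rests on the Strichartz estimate of Proposition~\ref{P:Strichartz}: under~\eqref{cond2d} it controls $\sup_{\mathscr A}\|u\|_{L_8(T,T+1;L_\infty)}$ uniformly in $\gamma$, and time-averaging plus H\"older then give $B_2\le C$. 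This yields all the claimed $\preceq$ estimates.

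For the lower bounds I would invoke Corollary~\ref{Cor:low}, i.e. take $N=2$, $\Phi\equiv0$, $F_0\equiv0$ and $f_\gamma(u)=\gamma(\sin(u_2/\gamma),-\sin(u_1/\gamma))$; then $f'(0)$ is the standard rotation generator, with complex eigenvalues $\pm i$, so Theorem~\ref{T:lower} applies and gives $\dim_F\mathscr A\ge C_d\gamma^{-d}$ with $C_d$ independent of $\gamma$. The point to verify --- and what makes the corollary meaningful --- is that this single nonlinearity also satisfies the hypotheses of the matching upper-bound theorem in every dimension: $|f_\gamma|\le\gamma\sqrt2$ and $|f'_\gamma|\le\sqrt2$ give~\eqref{condfb}; $F_0\equiv0$ makes \eqref{11}, \eqref{22}, \eqref{33} and~\eqref{cond2d} trivially true; and $\|f'(u)\|_{L_d}\le\sqrt2|\Omega|^{1/d}$, $\|f'(u)\|_{L_\infty}\le\sqrt2$ are $\gamma$-uniform, so $B_d,B_1,B_2=O(1)$. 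Hence for this family the upper and lower estimates both hold and coincide, up to the logarithm present only when $d=1$.

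I expect the only subtlety to be a low-dimensional one: the above $f_\gamma$ has $|f''_\gamma|=O(\gamma^{-1})$, which is not controlled, so before quoting Theorem~\ref{frac-low} for $d=1,2$ and Theorem~\ref{T:lower} one must make sure none of the hypotheses actually used requires a bound on $f''_\gamma$ --- and indeed they use only \eqref{condfb}--\eqref{22} (and, for $d=2$, the condition~\eqref{cond2d} on $F_0$, which is vacuous here). Once this compatibility is noted the argument is complete in all dimensions $d\ge1$; everything else is a direct citation of the already-proved theorems, with the upper bounds universal over the admissible classes of $f$ and the lower bounds realized by the specific family above.
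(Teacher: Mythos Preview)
Your proposal is correct and follows exactly the approach the paper intends: the paper gives no explicit proof, merely stating that the corollary follows by ``combining Theorem~\ref{T:main-d}, Theorem~\ref{frac-low}, and the results of this section,'' and your argument is a careful fleshing-out of that combination, including the verification that the concrete non-gradient example from Remark~\ref{R:Lower-bound} simultaneously satisfies the hypotheses of the upper-bound theorems in every dimension. Your observation that no bound on $f''_\gamma$ is ever required is precisely the point, and your check that condition~\eqref{cond2d} concerns only $F_0$ (vacuous here) is correct.
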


\begin{remark}\label{R:log} In the case $d=1$ the logarithm
in the upper bound \eqref{d1est}
for the fractal dimension of the attractor cannot
be removed at least using the volume contraction method, since
 in the example  in this section
 the Lyapunov dimension at the equilibrium points behaves
 like $\gamma^{-1}\ln(1+\gamma^{-1})$ as $\gamma\to0^+$.

In fact, let there be, say, $N$ unstable eigenvalues. Then
for any $n\le N$, the summation from $1$ to $n$ of the complex
conjugate pairs
of the unstable eigenvalues gives that the Lyapunov dimension
of the equilibrium $u=0$ is the
root of the equation
$$
-\gamma n+b\frac l\pi\sum_{j=1}^n \frac1j=0,
$$
which is of order $\gamma^{-1}\ln(1+\gamma^{-1})$ as $\gamma\to0$.

\end{remark}

\section{The case of a gradient nonlinearity}\label{S:sec6}
In this section, we will discuss the case where $f_b(u)\equiv0$, so the considered equation possesses a global Lyapunov function
\begin{equation}\label{4.Lyap}
\frac d{dt}\mathscr L(\xi_u(t))=-\gamma\|\Dt u(t)\|^2_{L_2},
\end{equation}
where
$$
 \mathscr L(\xi_u)=\frac12\|\partial_tu\|^2_{L_2}+
 \frac12\|\nabla u\|^2_{L_2}+(F_0(u),1)+ (g, u),
 $$
 see e.g., \cite{BV}.
 \par
This case is principally different to the ones considered
above since, due to this Lyapunov function,
the global attractor $\mathscr A$ is a union of the
unstable sets of equilibria $\xi_k=(u_k,0)\in\mathcal R$.
Moreover, generically the set $\mathscr A$ is finite
and all equilibria are hyperbolic. In this case, the
attractor is a finite union of unstable manifolds of this equilibria
\cite{BV}:
$$
\mathscr A=\bigcup_{k=1}^M\mathcal M_+(u_k).
$$
By this reason, the Hausdorff dimension of the attractor is given by
\begin{equation}\label{4.H}
\dim_H\mathscr A=\max_{k}\dim\mathcal M_+(u_k).
\end{equation}
Using also that the operator $-\Delta +f'(u_k)$ is self-adjoint for
any $k$ (since $f'(u_k)=\nabla^2F_0(u_k)$ is a symmetric matrix) and,
therefore, all its eigenvalues are real, it is easy to see that
the index of instability of any $\xi_k\in\mathcal R$ remains bounded
as $\gamma\to0$. This, together with \eqref{4.H} ensures that the
 Hausdorff dimension of the attractor $\mathscr A$ also remains bounded as $\gamma\to0$.
\par
However, the fractal dimension $\dim_F\mathscr A$ may a priori
grow as $\gamma\to0$ due to the complicated intersections of
stable and unstable manifolds, see \cite{ZPochin} for related examples.
Thus, it is still an interesting problem to estimate the fractal
dimension of the attractor in the gradient case. We start with the Lyapunov dimension of the attractor.
The main technical tool for our study  is the
following abstract theorem.
\begin{theorem}\label{Th-dim-grad} Let the assumptions of
Definition \ref{def-Lyap} hold and let the semigroup $S(t)$
possess a continuous global Lyapunov function $\mathscr L:\mathscr A\to\R$.
By definition, this function is non-increasing along the trajectories
and the equality $\mathscr L(S(t)\xi)=\mathscr L(\xi)$ for some $\xi\in\mathscr A$
and $t>0$ implies that $\xi\in\Cal R$ is an equilibrium.
\par
Then
\begin{equation}\label{dimeq}
\dim_L(S,\mathscr A)=\sup_{\xi\in\mathscr A\cap\mathcal R}\dim_L(S,\{\xi\}).
\end{equation}
\end{theorem}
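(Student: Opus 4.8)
The plan is to prove the two inequalities in \eqref{dimeq} separately, the ``$\ge$'' direction being essentially trivial and the ``$\le$'' direction being the substance of the theorem. For ``$\ge$'': since each equilibrium $\xi\in\mathscr A\cap\mathcal R$ is itself a compact invariant set contained in $\mathscr A$, and $\omega_n$ is monotone with respect to inclusion of invariant sets, we get $\dim_L(S,\{\xi\})\le\dim_L(S,\mathscr A)$ for every such $\xi$, and taking the supremum gives one inequality. (At a single equilibrium, $S'(\xi,t)=e^{t\mathcal L(\xi)}$ and $\dim_L(S,\{\xi\})$ is computed from the real parts of the eigenvalues of the linearization $\mathcal L(\xi)=-\Lambda_\varepsilon-F'(u_\xi)$ in the usual way.)

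The real work is the bound $\dim_L(S,\mathscr A)\le\sup_{\xi\in\mathscr A\cap\mathcal R}\dim_L(S,\{\xi\})$. Write $d_0$ for the right-hand side; I would prove $\omega_n(S,\mathscr A)<1$ for every (possibly non-integer) $n>d_0$, which forces $\dim_L(S,\mathscr A)\le d_0$. Recall $\omega_n(S,\mathscr A)=\inf_{t\ge0}[\omega_n(\mathscr A,t)]^{1/t}$ and $\omega_n(\mathscr A,t)=\sup_{\xi\in\mathscr A}\omega_n(S'(\xi,t))$. The key structural input is that $\omega_n(S'(\xi,t))<1$ at \emph{every} point of $\mathscr A$ once $t$ is large, but with a waiting time depending on $\xi$; the Lyapunov function is what upgrades this pointwise statement to a uniform one. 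Concretely: fix $n>d_0$. For an equilibrium $\xi\in\mathcal R$ we have $\omega_n(S'(\xi,t))=e^{t\,\alpha_n(\xi)}$ with $\alpha_n(\xi)<0$ strictly (this is exactly the content of $n>\dim_L(S,\{\xi\})$ together with the extension formula for non-integer $n$), so there is $T_\xi$ and $\kappa_\xi>0$ with $\omega_n(S'(\xi,t))\le e^{-\kappa_\xi t}$ for $t\ge T_\xi$. By continuity of $\xi\mapsto S'(\xi,t)$ on $\mathscr A$ and of the exterior-power norm, this persists in a neighborhood: there is an open set $U_\xi\ni\xi$ in $\mathscr A$, a time $T_\xi$ and $\kappa_\xi>0$ such that $\omega_n(S'(\eta,T_\xi))\le e^{-\kappa_\xi T_\xi}<1$ for all $\eta\in U_\xi$. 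Let $U_{\mathcal R}:=\bigcup_{\xi\in\mathscr A\cap\mathcal R}U_\xi$; since $\mathscr A\cap\mathcal R$ is compact (it is closed in the compact set $\mathscr A$, being the zero set of the continuous function $\xi\mapsto\mathscr L(S(1)\xi)-\mathscr L(\xi)$), finitely many $U_{\xi_1},\dots,U_{\xi_m}$ cover it, with corresponding times $\tau_i=T_{\xi_i}$ and rates $\kappa_i$; set $\tau_*=\max_i\tau_i$ and $c_*=\min_i(\kappa_i\tau_i)>0$. Then for every $\eta\in U_{\mathcal R}$ we have $\omega_n(S'(\eta,\tau_i))\le e^{-c_*}<1$ for the appropriate $i$, and by submultiplicativity $\omega_n(S'(\eta,t))$ is bounded, uniformly on $U_{\mathcal R}$, by a decaying exponential for all large $t$.

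It remains to control trajectories that spend a long time outside $U_{\mathcal R}$. On the compact set $K:=\mathscr A\setminus U_{\mathcal R}$ the Lyapunov function decreases strictly: more precisely, using the defining property of $\mathscr L$ (equality over a time interval forces an equilibrium) together with compactness, there is $\beta>0$ such that $\mathscr L(S(\tau_*)\xi)\le\mathscr L(\xi)-\beta$ for every $\xi\in K$ --- otherwise one extracts a sequence $\xi_k\in K$ with $\mathscr L(S(\tau_*)\xi_k)-\mathscr L(\xi_k)\to0$, a convergent subsequence $\xi_k\to\xi_\infty\in K$ (by continuity and invariance one then needs the stronger statement that equality of $\mathscr L$ at the two endpoints forces an equilibrium; this is the hypothesis, after passing to the limit and using continuity of $\mathscr L$ along the orbit), contradicting $\xi_\infty\notin\mathcal R$. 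Since $\mathscr L$ is bounded on the compact set $\mathscr A$, say $\operatorname{osc}_{\mathscr A}\mathscr L=:D$, any trajectory can make at most $D/\beta$ consecutive excursions into $K$ at the sampled times $0,\tau_*,2\tau_*,\dots$; all remaining sampled time is spent in $U_{\mathcal R}$, where the contraction estimate above applies. Also $\omega_n$ is bounded on $\mathscr A$ over the finite time $\tau_*$, say by $M=M(n,\tau_*)$; using the cocycle inequality $\omega_n(S'(\xi,t+h))\le\omega_n(S'(\xi,t))\,\omega_n(S'(S(t)\xi,h))$ repeatedly along the sampled orbit, we obtain, for $t=k\tau_*$ with $k$ large,
\[
\omega_n(S'(\xi,k\tau_*))\ \le\ M^{\,D/\beta}\cdot e^{-c_*\,(k-D/\beta)}\,,
\]
which tends to $0$ as $k\to\infty$, uniformly in $\xi\in\mathscr A$. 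Hence $\omega_n(\mathscr A,t)\to0$ and $\omega_n(S,\mathscr A)<1$, so $\dim_L(S,\mathscr A)\le n$; letting $n\downarrow d_0$ gives $\dim_L(S,\mathscr A)\le d_0$ and completes the proof.

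The main obstacle, and the place where care is needed, is the uniform strict decrease of $\mathscr L$ away from the equilibria, i.e.\ the passage from the pointwise hypothesis (``$\mathscr L(S(t)\xi)=\mathscr L(\xi)$ implies $\xi\in\mathcal R$'') to the quantitative gap $\beta>0$ on the compact complement $K$ of a neighborhood of $\mathcal R$. This requires the continuity of $t\mapsto\mathscr L(S(t)\xi)$ and of $\xi\mapsto S(t)\xi$ on $\mathscr A$ so that the limiting argument is legitimate; everything else is bookkeeping with the cocycle/submultiplicativity inequalities and the non-integer interpolation formula for $\omega_n$.
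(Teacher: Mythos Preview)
Your overall strategy is correct and close in spirit to the paper's: both arguments use the Lyapunov function to show that every trajectory on $\mathscr A$ spends all but a uniformly bounded amount of time near $\mathscr A\cap\Cal R$, and then glue contraction estimates near equilibria via the cocycle inequality. The paper organises the bookkeeping differently: instead of fixing $n>d_0$ and proving $\omega_n(S,\mathscr A)<1$, it fixes an arbitrary $n$ and shows directly that $\omega_n(S,\mathscr A)\le\sup_{\xi\in\mathscr A\cap\Cal R}\omega_n(S,\{\xi\})$, by writing each trajectory as alternating ``near an equilibrium $\xi_k$ within $\eb$ for time in $[M,2M]$'' and ``in transit'' pieces whose total length is bounded by some $L=L(\eb,M)$ independent of the trajectory, and then letting $\eb\to0$ and $\delta\to0$. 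This yields the stronger identity $\omega_n(S,\mathscr A)=\sup_{\xi\in\mathscr A\cap\Cal R}\omega_n(S,\{\xi\})$ for every $n$, from which \eqref{dimeq} follows without the final limit $n\downarrow d_0$.

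There is one genuine, though easily repaired, gap in your argument. You choose neighbourhoods $U_{\xi_i}$ and times $\tau_i$ so that $\omega_n(S'(\eta,\tau_i))\le e^{-c_*}$ for $\eta\in U_{\xi_i}$, set $\tau_*=\max_i\tau_i$, and then sample the orbit at the \emph{uniform} step $\tau_*$. But at a ``good'' sampled point $\eta\in U_{\xi_i}$ you have only controlled $\omega_n(S'(\eta,\tau_i))$, not $\omega_n(S'(\eta,\tau_*))$; the cocycle bound over the leftover $\tau_*-\tau_i$ merely contributes a factor $\le M$, which may cancel the contraction, so the displayed inequality $\omega_n(S'(\xi,k\tau_*))\le M^{D/\beta}e^{-c_*(k-D/\beta)}$ is not justified as written. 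The fix is to secure a single common time before choosing neighbourhoods: since at an equilibrium $t\mapsto\omega_n(S'(\xi,t))^{1/t}$ is non-increasing, the relatively open sets $\{\xi\in\mathscr A\cap\Cal R:\omega_n(S'(\xi,T))<1\}$ increase with $T$ and cover the compact set $\mathscr A\cap\Cal R$, so some single $T_0$ works for all equilibria (this is exactly the uniformity step the paper proves by a compactness--contradiction argument). Then by continuity at the fixed time $T_0$ one finds a neighbourhood $U_{\Cal R}$ on which $\omega_n(S'(\cdot,T_0))\le\rho<1$, and your sampled-orbit estimate with step $T_0$ goes through verbatim.
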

\begin{proof} Although this result is some kind of folks knowledge,
we failed to find a sharp reference and, by this reason, prefer to
 give its proof here.
\par
Note first of all that $\mathscr A\cap\Cal R\subset\mathscr A$, so only
the sign "$\le$" in \eqref{dimeq} requires a proof. Let $\xi_u(t)\in\mathscr A$,
$t\in\R_+$, be a trajectory on $\mathscr A$. { Then, for every $\eb>0$ and every $M\gg1$,
 there exist $L=L(\eb,M)$ and  sequences $T_k^\pm$ and $L_k $,
 $k\in\mathbb N$, $\xi_k\in\mathscr A\cap \Cal R$, such that
 \begin{equation}\label{seq}
 \begin{cases}
 \|\xi_u(t)-\xi_k\|_E\le\eb,\ \ t\in [T_k^-,T_k^+], \ \   T_{k+1}^-=T_k^++L_k,\\ M\le T_k^+-T_k^-\le 2M,\ \ \sum_{k}L_k\le L.
\end{cases}
 \end{equation}
 Moreover, without loss of generality, we may assume that either $L_k=0$ or $L_k\ge1$, so the total number of non-zero $L_k$ is finite.}
The existence of such sequences follows in a  straightforward
way from the properties of a Lyapunov function, see \cite{BV,VZCh}.
Here $T_k^\pm$ and $L_k$ depend on the choice of $u(t)$, but $L$ and
$M$ are independent of $u$. In addition, since the sequence $L_k$ contains
only finitely many non-zero terms, so we have the stabilization of
 $u(t)$ to $\Cal R$ as $t\to\infty$.
\par
Denote $\omega_n:=\sup_{\xi\in\mathscr A\cap\Cal R}\omega_n(S,\{\xi\})$
and for every $\delta\ll 1$ let $M=M(\delta)\gg1$ be such
that $\omega_n(\{\xi\},T)\le(\omega_n+\delta)^T$ for all
 $T\ge M$ and all $\xi\in\mathscr A\cap\Cal R$.
{
 Indeed, assume that such $M$ does not exist, then there exists
 sequences $\xi_k\in\mathscr A\cap\mathcal R$, $M_k\to\infty$ and
 $\delta_0>0$ such that
 $$
 \omega_n(\{\xi_k\},M_k)\ge(\omega_n+\delta_0)^{M_k}.
 $$
 Without loss of generality, we assume that
 $\xi_k\to\xi_0\in\mathscr A\cap\mathcal R$ and,
   since the function $T\to\sqrt[T]{\omega_n(\{\xi\},T)}$ is non-increasing, we have
   $$
   \omega_n(\{\xi_k\},M)\ge(\omega_n+\delta_0)^{M}
 $$
 for every  $M$ and every sufficiently large $k$. The continuity of
 the norm $\xi\to\omega_n(\{\xi\},M)$ now gives
 $$
 \omega_n(\{\xi_0\},M)\ge(\omega_n+\delta_0)^{M},\ \forall M,
 $$
 which contradicts the definition of $\omega_n$.
 \par
  Using the continuity
of norms $\omega_n$, together with inequalities \eqref{seq}, we see that
\begin{multline*}
\omega_n(S'(S(T_k^-)\xi,T_k^+-T_k^-))\le
\omega_n(S'(\xi_k,T_k^+-T_k^-))+A_M(\eb)\le\\
\le \(\omega_n+\delta+A_M(\eb)^{1/(2M)}\)^{T_k^+-T_k^-},
\end{multline*}
where $\lim_{\eb\to0}A_M(\eb)=0$ for every fixed $M$. Taking a
 big number $T\gg1$ and using sub-multiplicativity, we end up with
\begin{multline*}\label{KL}
\omega_n(S'(\xi,T))\le e^{Kn(L+2M+1)}\(\omega_n+\delta+A_M(\eb)^{1/(2M)}\)^{\sum_{k=1}^{k(T)}(T_{k}^+-T_{k}^-)}\le\\\le
\delta^{-L-2M-1}e^{2nK(L+2M+1)}\(\omega_n+\delta+A_M(\eb)^{1/(2M)}\)^{T},
\end{multline*}
where $k(T):=\max\{k\in\mathbb N,\, T_{k}^+<T-1\}$ and $K>1$ is such that
$$
\max_{\xi\in\mathcal A}\|S'(\xi,t)\|_{\mathcal L(E,E)}\le e^{Kt}
$$
for all $t\ge1$. Such number exists due to sub-multicativity (since the root of power $t$ from the left-hand side of the last formula is a non-increasing and bounded for $t\ge1$ function of time).
We have also implicitly used here that
$$
\sum_{k=1}^{k(T)}(T_{k}^+-T_{k}^-)\ge T-L-2M-1\ \ \text{ and }\ \ \omega_n+\delta+A_M(\eb)^{1/(2M)}\ge\delta.
$$}
Taking the supremum with respect to $\xi\in\mathscr A$
together with the root of power $T$, we end up with the inequality
$$
\omega_n(S,\mathscr A)\le \omega_n+\delta+A_M(\eb)^{1/(2M)}.
$$
Since $M=M(\delta)$ is independent of $\eb$, we may pass to the
limit $\eb\to0$ and get
$$
\omega_n(S,\mathscr A)\le \omega_n+\delta
$$
and finally passing to the limit $\delta\to0$, we end up
with $\omega_n(S,\mathscr A)\le\omega_n$ which finishes the proof of the theorem.
\end{proof}
We now return to our damped wave equation with a gradient non-linearity.
 In this case  equation \eqref{4.Lyap}  holds,
 (we emphasize that the condition of
 hyperbolicity of all equilibria is not posed here),
  so all  the assumptions of the theorem are satisfied and we have the estimate
$$
\dim_F\mathscr A\le \dim_L(S,\mathscr A)= \sup_{\xi\in\mathcal R }\dim_L(S,\{\xi\}).
$$
Moreover, since the Lyapunov dimension of an equilibrium can be easily expressed
through the
 corresponding eigenvalues, see e.g. \cite{T, KR}, an
 elementary calculation gives
\begin{equation}\label{dimLyap1}
\dim_L(S,\{\xi\})\le C\gamma^{-1},
\end{equation}
where $C$ may depend on $d$, but is independent of $\gamma\to0$.

Indeed, let $\xi=(\bar u,0)$ be a stationary solution of
our damped wave equation. Then
$$
-\Delta \bar u+f(\bar u)=g, \quad \bar u\vert_{\partial\Omega}=0,
$$
and we consider the corresponding linearized
equation
$$
\partial_t^2u+\gamma \partial_tu=\Delta u+a(x)u,\quad a(x)=f'(\bar u(x)),
$$
or
$$
\partial_ty=Ay, \quad A=\left(
                                 \begin{array}{cc}
                                   0 & I\\
                                -L_a& -\gamma \\
                                 \end{array}
                               \right),
$$
where $y=(u,\partial_tu)$, $L_au=-\Delta u+a(x)u$, $u\vert_{\partial\Omega}=0$.

Let $\{\nu_j\}_{j=1}^\infty$, $\nu_j\to\infty$, be the non-decreasing sequence
 of eigenvalues of the operator $L$. Then the point spectrum of the operator
 $A$ can be expressed in terms of the $\nu_j$'s as follows
 (see \cite[Theorem~IV.4.5]{BV}):
it consists of the two sequences
\begin{equation}\label{mus}
\mu^1_{j}=(-\gamma-\sqrt{\gamma^2-4\nu_j})/2,\quad
\mu^2_{j}=(-\gamma+\sqrt{\gamma^2-4\nu_j})/2.
\end{equation}

We now estimate
the numbers $\omega_n(S,{(\bar u,0)})$, see \eqref{q(n)}.
Taking into account~\eqref{mus}  we obtain
\begin{multline*}
\omega_n(S,\{(\bar u,0)\})=-\frac{n\gamma}2+\\+\frac12\sum_{j=1}^n\sqrt{\gamma^2-4\nu_j}
\le-\frac{n\gamma}2+\sum_{j=1}^n\sqrt{-\nu_j}
=:-\frac{n\gamma}2+C_{\bar u},
\end{multline*}
which proves \eqref{dimLyap1} for $\xi=(\bar u,0)$.

 The uniformity of this estimate with
respect to $\xi\in\mathcal R$ follows in a standard way from
the min-max principle by bounding the operator $L_a$ from below
by the operator $L_C:=-\frac12\Delta-C$ and choosing the proper values of $C$
in order to make this bound uniform with respect to $\xi\in\mathcal R$.

Indeed, let for simplicity $d\ge3$. Then, due to the growth restriction on $f'$,
unform bounds of $\mathcal R$ in $H^1_0(\Omega)$ and the Sobolev inequality
$$
\|v\|_{L_{2d/(d-2)}}\le\mathrm S_d\|\nabla v\|_{L_2},
$$
we have the bound  $\|a\|_{L_d}\le c_0$ for the
norms  $\|a\|_{L_d}$ that is  uniform with respect to $\gamma\to0$ and $\mathcal R$
(in the case $d=1$ we will have bounds for $L_\infty$ and for $d=2$,
 the estimate is true in $L_p$ for all $p<\infty$).  Therefore,
 by the  H\"older  and Sobolev inequalities
\begin{multline*}
(L_a v,v)=\|\nabla v\|^2_{L_2}+(a v,v)\ge \|\nabla v\|^2_{L_2}-
\mathrm S_d\|a\|_{L_d}\|v\|_{L_2}\|\nabla v\|_{L_2}
\ge\\\ge \frac12\|\nabla v\|^2_{L_2}-\frac12\mathrm S_d^2\|a\|_{L_d}^2\|v\|^2_{L_2}\ge
\frac12\|\nabla v\|^2_{L_2}-\frac12\mathrm S_d^2c_0^2\|v\|^2_{L_2},
\end{multline*}
which gives us a uniform lower bound for the operators $L_a$
and hence
 the desired  upper bound for the Lyapunov dimension.
\medskip

Combining the obtained estimates and Remark~\ref{R:log}
 we obtain the following result.
\begin{corollary} Let the assumptions of Theorem~\ref{Th-dim-grad}
 be satisfied and let
 $f_\gamma\equiv0$.   Then
$$
\dim_F\mathscr A\le\dim_L(S,\mathscr A)\le
   C\gamma^{-1}, \ d\ge1,
$$
where the corresponding constant $C$ is independent of $\gamma\to0$.
\par
Moreover, let there exist an equilibrium $\xi\in\mathcal R$ with
strictly negative eigenvalue of the corresponding operator $L_a$
(this means that the attractor is not trivial). Then the Lyapunov dimension
possesses two-sided optimal bounds of the same order:
$$
C_1\gamma^{-1}\le \dim_L(S,\mathscr A)\le C_2\gamma^{-1},
\quad d\ge1,
$$
where $C_1>0$ and $C_2>0$ are independent of $\gamma\to0$.
\par
Finally,  assume that all  the equilibria $\xi\in\Cal R$
 are hyperbolic (then their number is automatically finite).
Then the Hausdorff dimension of the attractor does not grow as
$\gamma\to0$  and we have two-sided estimates
$$
C_1\sim\dim_H\mathscr A\le\dim_F\mathscr A\le\dim_L(S,\mathscr A)\sim C_2\gamma^{-1}.
$$
\end{corollary}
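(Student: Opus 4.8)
The plan is to obtain all three assertions from the abstract Theorem~\ref{Th-dim-grad}, the spectral computation \eqref{mus}--\eqref{dimLyap1} carried out just above, and the universal inequalities $\dim_H\mathscr A\le\dim_F\mathscr A\le\dim_L(S,\mathscr A)$. For the first, universal upper bound I use that $f_\gamma\equiv0$ provides the Lyapunov function \eqref{4.Lyap}, so Theorem~\ref{Th-dim-grad} reduces $\dim_L(S,\mathscr A)$ to $\sup_{\xi\in\mathscr A\cap\mathcal R}\dim_L(S,\{\xi\})$; at a single equilibrium $\xi=(\bar u,0)$ the linearization has the spectrum \eqref{mus}, so the sum of its first $n$ Lyapunov exponents does not exceed $-n\gamma/2+C_{\bar u}$ with $C_{\bar u}=\sum_{j:\,\nu_j<0}\sqrt{-\nu_j}$, whence $\dim_L(S,\{\xi\})\le C\gamma^{-1}$. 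The one remaining point is uniformity of $C_{\bar u}$ over $\mathcal R$: this is exactly the min--max argument already given above, bounding $L_a$ from below by $-\tfrac12\Delta-C$ with $C$ fixed through the uniform $H^1_0$-bound on $\mathcal R$ and the Sobolev and H\"older inequalities applied to $\|f'(\bar u)\|_{L_d}$ (resp. $\|f'(\bar u)\|_{L_\infty}$ for $d=1$ and $\|f'(\bar u)\|_{L_p}$, $p<\infty$, for $d=2$). Combined with $\dim_F\mathscr A\le\dim_L(S,\mathscr A)$ this gives the first chain.

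For the matching lower bound, the crucial remark is that, since $f_\gamma\equiv0$, the equilibria solve the $\gamma$-independent equation $-\Delta\bar u+f(\bar u)=g$, so each operator $L_a=-\Delta+f'(\bar u)$ together with its spectrum $\{\nu_j\}$ is independent of $\gamma$. Fix an equilibrium $\xi=(\bar u,0)$ whose $L_a$ has a negative eigenvalue $\nu_{j_0}<0$ (such $\xi$ exists precisely when the attractor is non-trivial). For $\gamma$ small the first positive eigenvalue of this $L_a$ exceeds $\gamma^2/4$, so by \eqref{mus} every $\mu^{1,2}_j$ other than the finitely many stemming from negative $\nu_j$ has real part exactly $-\gamma/2$; ordering the Lyapunov exponents decreasingly, the positive ones $\Re\mu^2_j=\tfrac12(-\gamma+\sqrt{\gamma^2+4|\nu_j|})\to\sqrt{-\nu_j}$ come first and are then followed only by copies of $-\gamma/2$, so for every $n$ exceeding the bounded instability index $m:=\#\{j:\nu_j<0\}$ the sum of the first $n$ Lyapunov exponents equals $\sum_{j:\,\nu_j<0}\Re\mu^2_j-(n-m)\tfrac\gamma2$. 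Hence $\dim_L(S,\{\xi\})=m+\tfrac2\gamma\sum_{j:\,\nu_j<0}\Re\mu^2_j\sim\tfrac2\gamma\sum_{j:\,\nu_j<0}\sqrt{-\nu_j}$, and by Theorem~\ref{Th-dim-grad} this lower bound (in fact the sharp asymptotics) passes to $\dim_L(S,\mathscr A)$.

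In the regular case, hyperbolicity makes each equilibrium isolated, so the compact set $\mathcal R$ is finite, $\mathscr A=\bigcup_k\mathcal M_+(u_k)$, and \eqref{4.H} yields $\dim_H\mathscr A=\max_k\dim\mathcal M_+(u_k)$. Since $\dim\mathcal M_+(u_k)=\operatorname{ind}_+(\xi_k)=\#\{j:\nu_j(u_k)<0\}$ and, as noted above, $\mathcal R$ and the operators $L_a$ are $\gamma$-independent, this maximum is a constant $C_1$ not depending on $\gamma$. Non-triviality of the attractor forces one of the equilibria to be unstable, i.e. to carry a negative $\nu_{j_0}$, so the previous paragraph applies; combining it with the first paragraph gives $C_1\sim\dim_H\mathscr A\le\dim_F\mathscr A\le\dim_L(S,\mathscr A)\sim C_2\gamma^{-1}$, with the same equilibrium computation as in Remark~\ref{R:log}.

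The assembly in the first and third paragraphs is routine; the step I expect to be the main obstacle is the ordering of the (infinitely many) Lyapunov exponents in the second paragraph — one has to check that the finitely many exponents lying strictly below $-\gamma/2$, namely $\Re\mu^1_j$ for $\nu_j<0$, occupy the bottom of the list and so are never reached while $n$ stays of order $\gamma^{-1}$, and this is precisely where one uses that, for $\gamma$ small, $L_a$ has no eigenvalues in the window $(0,\gamma^2/4)$.
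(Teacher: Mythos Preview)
Your proposal is correct and follows essentially the same route as the paper: reduce $\dim_L(S,\mathscr A)$ to equilibria via Theorem~\ref{Th-dim-grad}, use the explicit spectrum \eqref{mus} and the estimate \eqref{dimLyap1} together with the min--max uniformity argument for the upper bound, and invoke \eqref{4.H} for the Hausdorff statement in the hyperbolic case. Your treatment of the lower bound $\dim_L(S,\{\xi\})\gtrsim\gamma^{-1}$ is in fact more careful than the paper's: the paper contents itself with the sentence ``Combining the obtained estimates and Remark~\ref{R:log}'' and the displayed identity $\omega_n(S,\{(\bar u,0)\})=-\tfrac{n\gamma}2+\tfrac12\sum_{j=1}^n\sqrt{\gamma^2-4\nu_j}$, whereas you actually verify the ordering of the exponents and observe that, for small $\gamma$, no $\nu_j$ falls into $(0,\gamma^2/4)$, so after the finitely many positive exponents one sees only copies of $-\gamma/2$.
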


Indeed, the Hausdorff dimension of an unstable manifold
equals  the instability index of the corresponding
equilibrium and this index, in turn, is bounded  from above by
the number of the unstable eigenvalues of the corresponding operator $L_C$
which is independent of $\gamma\to0$.

\begin{remark} We see that the obtained estimates are sharp for
both Hausdorff and Lyapunov dimensions, but there is still an
essential gap between upper and lower bounds for the fractal
dimension. This raises  an interesting question about the
fractal dimension for the attractor of a gradient system
(or more general, for a system with the global Lyapunov function)
when all of the equilibria are hyperbolic. Namely, could the
fractal dimension be bigger than the Hausdorff dimension in that case?
The affirmative answer on this question
is given in \cite{ZPochin} for the case of $C^k$-smooth gradient
systems for any finite $k$, but the case of $C^\infty$ or real
analytic gradient systems remain completely open. Another
interesting related question is about the case where stable
and unstable manifolds of equilibria intersect transversally
(the Morse--Smale case). We make a conjecture that in this case
the fractal dimension of the attractor coincides with the Hausdorff dimension.
\end{remark}
 \setcounter{equation}{0}
\section{Estimates for systems with suborthonormal gradients}\label{S:sec7}

In many applications (including ours) one deals with
vector systems or many-component systems that are orthonormal in
a Hilbert space $H$. For instance,
let the system $\{\varphi_i,\psi_j\}_{i=1}^n$ satisfy
$$
(\varphi_i,\varphi_j)+(\psi_i,\psi_j)=\delta_{i\,j},\quad i,j=1,\dots, n,
$$
or, equivalently, for all $\xi\in\mathbb R^n$
$$
\sum_{i,j=1}^n(\varphi_i,\varphi_j)\xi_i\xi_j+\sum_{i,j=1}^n(\psi_i,\psi_j)\xi_i\xi_j=
\sum_{j=1}^n\xi_j^2.
$$
Since
$$
\sum_{i,j=1}^n\xi_i\xi_j(\varphi_i,\varphi_j)=
\left\|\sum_{i=1}^n\xi_i\varphi_i\right\|^2\ge0
$$
 and similarly for $\{\psi_j\}_{j=1}^n$, both sums on the left-hand side are non-negative, and therefore
\begin{equation}\label{sub}
\sum_{i,j=1}^n(\varphi_i,\varphi_j)\xi_i\xi_j\le\sum_{j=1}^n\xi_j^2,
\end{equation}
and similarly for $\{\psi_j\}_{j=1}^n$.

A system $\{\varphi_j\}_{j=1}^n$ satisfying inequality ~\eqref{sub}
for every $\xi\in \mathbb R^n$ is called suborthonormal. This is a
useful and flexible generalization of orthonormality  (see
\cite{GMT}, where it
  probably appeared for the first time).

\begin{lemma}\label{L:sub}
Let ${K}$ be a compact self-adjoint positive operator in a
Hilbert space $H$ with spectrum
$$
\mathrm{K}e_i=\mu_ie_i,\quad \mu_1\ge\mu_2\ge\dots\to0,\quad (e_i,e_j)=\delta_{i\,j}.
$$
Then for any suborthonormal system $\{\varphi_i\}_{i=1}^n$
\begin{equation}\label{varsub}
\sum_{i=1}^n(\mathrm K\varphi_i,\varphi_i)\le\sum_{i=1}^n\mu_i.
\end{equation}
\end{lemma}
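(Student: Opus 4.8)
The plan is to recognize the left-hand side of \eqref{varsub} as the trace $\Tr(\mathrm K S)$ of $\mathrm K$ against an auxiliary operator $S$ that is non-negative, has norm at most one, and has trace at most $n$; the inequality then drops out of the monotonicity of the sequence $\{\mu_i\}$ by an elementary rearrangement argument.

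First I would introduce the synthesis operator $T\colon\R^n\to H$, $T\xi:=\sum_{i=1}^n\xi_i\varphi_i$, and set $S:=TT^*$, so that $Sx=\sum_{i=1}^n(x,\varphi_i)\varphi_i$ is self-adjoint, non-negative and finite-rank. Its companion $T^*T$ is precisely the Gram matrix $G=\{(\varphi_i,\varphi_j)\}_{i,j=1}^n$, and the suborthonormality condition \eqref{sub} says exactly that $G\le I$ on $\R^n$. Since $TT^*$ and $T^*T$ share the same non-zero spectrum, this yields $0\le S\le I$ on $H$; and putting $\xi=e_i$ in \eqref{sub} gives $\Tr S=\Tr G=\sum_{i=1}^n\|\varphi_i\|^2\le n$.

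Next, completing $\{e_i\}$ to an orthonormal basis of $H$ by vectors of $\ker\mathrm K$ (on which $\mu$ may be taken to vanish) and using that $\mathrm K S$ is finite-rank together with $\mathrm K e_k=\mu_k e_k$, I would write
$$
\sum_{i=1}^n(\mathrm K\varphi_i,\varphi_i)=\Tr(\mathrm K S)=\sum_{k}(\mathrm K Se_k,e_k)=\sum_{k}\mu_k\,(Se_k,e_k)=:\sum_{k}\mu_k s_k,
$$
where, by the previous step, $0\le s_k\le 1$ for every $k$ and $\sum_k s_k\le n$. It then remains to prove the purely numerical inequality $\sum_k\mu_k s_k\le\sum_{k=1}^n\mu_k$ under these constraints with $\mu_1\ge\mu_2\ge\dots\ge0$: using $\mu_k\le\mu_n$ for $k>n$ and $\sum_{k>n}s_k\le n-\sum_{k=1}^n s_k$,
\begin{multline*}
\sum_{k}\mu_k s_k-\sum_{k=1}^n\mu_k=\sum_{k=1}^n\mu_k(s_k-1)+\sum_{k>n}\mu_k s_k\le\\
\le\sum_{k=1}^n\mu_k(s_k-1)+\mu_n\sum_{k=1}^n(1-s_k)=\sum_{k=1}^n(\mu_k-\mu_n)(s_k-1)\le 0,
\end{multline*}
since $\mu_k-\mu_n\ge0$ and $s_k-1\le0$ for $1\le k\le n$. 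This is \eqref{varsub}.

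The one step that needs genuine care is the transition from the finite matrix inequality $G\le I$ to the operator inequality $0\le S\le I$ on the infinite-dimensional space $H$ — this is exactly what the identities $S=TT^*$, $G=T^*T$ are for; everything else is bookkeeping. (Equivalently, \eqref{varsub} for genuinely orthonormal systems is the classical Ky Fan maximum principle, and the observation $S\le I$ reduces the suborthonormal case to it.)
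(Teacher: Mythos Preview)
Your proof is correct and follows essentially the same strategy as the paper's: both introduce the synthesis operator (the paper's $O$, your $T$), observe that suborthonormality forces its norm to be at most one, and rewrite $\sum_i(\mathrm K\varphi_i,\varphi_i)$ as a trace against a rank-$n$ operator with norm $\le 1$. The only difference is in the final step --- the paper uses cyclicity of the trace to land on a sum indexed by the first $n$ eigenvectors and then bounds each diagonal entry by $1$, whereas you keep the full sum $\sum_k\mu_k s_k$ and close with an explicit rearrangement inequality; your version is arguably the more transparent of the two.
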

\begin{proof}
We point out that for an orthonormal system $\{\varphi_i\}_{i=1}^n$
inequality~\eqref{varsub} immediately follows from the variational principle.

Now let $P$ be the orthogonal projection onto $\mathrm{Span}\{e_1,\dots,e_n\}$.
There exists a bounded operator $O$, such that
$$
\varphi_i=Oe_i,\quad i=1,\dots,n,
$$
and we set $Oe_i=0$ for $i>n$.

Therefore
$(\varphi_i,\varphi_j)=(Be_i,e_j)$, where
 $B:=O^*O$, and  for an $a=\sum_{j=1}\xi_ie_i$
 with $\sum_{j=1}^n\xi^2_j=1$ we have by suborthonormality that
$$
(Ba,a)=\sum_{i,j=1}^n\xi_i\xi_j(Be_i,e_j)=
\sum_{i,j=1}^n\xi_i\xi_j(\varphi_i,\varphi_j)\le\sum_{j=1}^n\xi_j^2=\|a\|^2.
$$
This implies that
$$
1\ge\|B\|=\|O\|=\|O^*\|,
$$
and using  cyclicity of the trace this finally gives that
$$
\aligned
\sum_{i=1}^n(K\varphi_i,\varphi_i)=
\sum_{i=1}^n( PO^*K OPe_i,e_i)=\Tr(PO^*K OP)=\\=
\Tr(OPO^*K)=\sum_{i=1}^n( OPO^*Ke_i,e_i)=
\sum_{i=1}^n\mu_i( OPO^*e_i,e_i)\le\sum_{i=1}^n\mu_i,
\endaligned
$$
since $\|OPO^*\|\le1$.
\end{proof}

The next theorem collects the inequalities for systems with suborthonormal gradients
 that were used in Section~\ref{S:sec3}.  The scalar case is
 singled out and
 considered first. The inequalities used in Theorem~\ref{frac-low} in
  the cases $d=1$ and $d=2$ are treated separately in Proposition~\ref{P:d12}.

\begin{theorem}
\label{T:Lieb_suborth} Let $\{\varphi_i\}_{i=1}^n\in
H^1_0(\Omega)$, $\Omega\subset R^d$ make up a system of scalar
functions with suborthonormal  gradients in $L_2(\Omega)$:
\begin{equation}\label{suborth}
\sum_{i,j=1}^n\xi_i\xi_j(\nabla\varphi_i,\nabla\varphi_j)\le
\sum_{j=1}^n\xi_j^2.
\end{equation}
Then  the function
$$
\rho(x):=\sum_{j=1}^n|\varphi_j(x)|^2
$$
satisfies the following inequalities.

If $d=1$ and $\Omega=(0,\ell)$, then
\begin{equation}\label{d1sharp}
\|\rho\|_{L_\infty}\le\frac \ell 4\,,
\end{equation}
where the constant is sharp.

If $d=2$, and $|\Omega|<\infty$, then
\begin{equation}\label{d22}
\|\rho\|_{L_1}\le\frac{|\Omega|}{2\pi}\ln(en).
\end{equation}

If $d\ge3$,  $p=d/(d-2)$ and $\Omega\subseteq\mathbb R^d$  is an arbitrary domain, then
\begin{equation}\label{d3}
\|\rho\|_{L_p}\le L_{0,d}^{2/d}\frac d{d-2}n^{(d-2)/d},
\end{equation}
where $L_{0,d}$ is the constant in the Cwikel--Lieb--Rozenblum
bound for the number $N(0,-\Delta-V)$ of  negative eigenvalues of
the Schr\"odinger operator $-\Delta-V$, $V(x)\ge0$ in $\mathbb
R^d$, see~\cite{Cwikel,L,R}:
\begin{equation}\label{CLRbound}
N(0,-\Delta-V)\le L_{0,d}\int_{\mathbb R^d}V(x)^{d/2}dx.
\end{equation}

Next, if  vector functions
$\varphi_i=(\varphi_i^1,\dots,\varphi_i^N)$
make up a system
$\{\varphi_i\}_{i=1}^n\in \bm H^1_0(\Omega)$ with suborthonormal gradients in
$\bm L_2(\Omega)$ in the sense of \eqref{suborth}, then
the following inequalities hold:

\begin{align}
&1) \ d=1\quad \|\rho\|_{L_\infty}\le N\frac \ell 4, \label{Nd1}\\
&2)\ d=2\quad \|\rho\|_{L_1}\le  N \frac{|\Omega|}{2\pi}\ln(en), \label{Nd2}\\
&2) \ d\ge 3,  \ p=d/(d-2)\quad
\|\rho\|_{L_p}\le (NL_{0,d})^{2/d}\frac d{d-2}n^{(d-2)/d}, \label{Nd3}
\end{align}

\end{theorem}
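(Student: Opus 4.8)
The plan is to prove the scalar inequalities \eqref{d1sharp}, \eqref{d22}, \eqref{d3} first, and then upgrade each of them to its vector-valued analogue \eqref{Nd1}, \eqref{Nd2}, \eqref{Nd3} by a component-splitting argument. The three scalar cases require genuinely different tools, so I would treat them in order of increasing difficulty.

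For $d=1$ with $\Omega=(0,\ell)$: each $\varphi_j\in H^1_0(0,\ell)$, so for any fixed $x$ we have $\varphi_j(x)=\int_0^x\varphi_j'(s)\,ds=-\int_x^\ell\varphi_j'(s)\,ds$, hence by Cauchy--Schwarz $|\varphi_j(x)|^2\le \min(x,\ell-x)\int_0^\ell|\varphi_j'|^2$. More efficiently, write $\varphi_j(x)$ against the fixed function $e_x(s):=(s\wedge x)-\tfrac{x}{\ell}s$ (the Green's function of $-d^2/ds^2$ on $(0,\ell)$ evaluated at $x$), so that $\varphi_j(x)=(\varphi_j',e_x')_{L_2}$ where $\|e_x'\|_{L_2}^2=x(\ell-x)/\ell=:G(x,x)$. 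Then $\rho(x)=\sum_j|(\varphi_j',e_x')|^2$ is the squared norm of the projection of $e_x'$ onto $\mathrm{Span}\{\varphi_j'\}$; since $\{\varphi_j'\}$ is suborthonormal in $L_2$, this is bounded by $\|e_x'\|_{L_2}^2=G(x,x)\le \ell/4$, with equality at $x=\ell/2$. Sharpness follows by taking $n=1$ and $\varphi_1$ proportional to the tent function centred at $\ell/2$.

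For $d\ge3$ this is exactly where the CLR bound \eqref{CLRbound} enters, and it is the conceptual heart of the theorem. The idea (due to Lieb) is duality: for a nonnegative potential $V$, the Birman--Schwinger / variational principle gives that $N(0,-\Delta-V)\ge n$ precisely when there is an $n$-dimensional subspace on which $(-\Delta-V)$ is negative, i.e. when $\int V\rho > \int|\nabla\varphi|^2$-type inequalities can be violated; running this against our suborthonormal family and optimizing the test potential $V$ yields $\int_\Omega V\rho\le C\big(\int V^{d/2}\big)^{2/d} n^{(d-2)/d}$ after using \eqref{CLRbound}, and then taking the supremum over $V$ with $\|V\|_{L_{d/2}}=1$ identifies the left side with $\|\rho\|_{L_{d/(d-2)}}$ by $L_p$-duality (note $(d/(d-2))'=d/2$). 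Tracking constants produces exactly $L_{0,d}^{2/d}\frac{d}{d-2}n^{(d-2)/d}$. I expect this duality-plus-constant-bookkeeping step to be the main obstacle; the case $d=2$, inequality \eqref{d22}, is the critical endpoint where CLR degenerates, and there one instead uses the logarithmic Sobolev / Moser--Trudinger-type bound: project $e_x$ (Green's function of $-\Delta$ in 2D, which has a logarithmic singularity) and estimate the resulting $\sum_j\lambda_j^{-1}$-type sum using the Weyl law $\lambda_j\gtrsim j/|\Omega|$, which yields the harmonic-sum factor $\ln(en)$ and the constant $|\Omega|/(2\pi)$.

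Finally, for the vector-valued statements \eqref{Nd1}--\eqref{Nd3}: if $\{\varphi_i\}_{i=1}^n$ has suborthonormal gradients in $\bm L_2(\Omega)=L_2(\Omega)^N$, then for each fixed component index $k\in\{1,\dots,N\}$ the scalar family $\{\varphi_i^k\}_{i=1}^n$ has suborthonormal gradients in $L_2(\Omega)$ (the quadratic form drops when we discard the other components), so the already-proven scalar bound applies to $\rho^k(x):=\sum_i|\varphi_i^k(x)|^2$. Summing $\rho=\sum_{k=1}^N\rho^k$ and using the triangle inequality in the relevant $L_p$ norm gives the factor $N$ in \eqref{Nd1} and \eqref{Nd2} immediately; for \eqref{Nd3} one gets $\|\rho\|_{L_p}\le N\cdot L_{0,d}^{2/d}\frac{d}{d-2}n^{(d-2)/d}$, which is weaker than claimed. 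To recover the stated $(NL_{0,d})^{2/d}$ one must instead apply the CLR argument directly to the matrix-valued potential acting on $\mathbb R^N$-valued functions, where the CLR constant for the vector Schrödinger operator is $N L_{0,d}$ (CLR for a system of $N$ decoupled copies), so the same duality computation as in the scalar $d\ge3$ case goes through verbatim with $L_{0,d}$ replaced by $NL_{0,d}$. This is the cleaner route and is the one I would actually write up.
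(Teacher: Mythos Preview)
Your $d=1$ argument via the Green's function $e_x$ is correct and essentially dual to the paper's Foias trick (fix $x$, apply the sharp inequality $\|u\|_{L_\infty}^2\le\tfrac{\ell}{4}\|u'\|_{L_2}^2$ to $u=\sum_j\xi_j\varphi_j$, then set $\xi_j=\varphi_j(x)$); both rest on the same $G(x,x)=x(\ell-x)/\ell$ computation. Your vector-case strategy also matches the paper's: component splitting for $d=1,2$, and rerunning the CLR argument for the decoupled $N$-copy Schr\"odinger operator (with constant $NL_{0,d}$) for $d\ge3$.

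For $d\ge3$ you have the right architecture---duality between $\rho$ and a test potential $V$, with \eqref{CLRbound} as input---but the phrase ``running this against our suborthonormal family'' hides the actual mechanism and your ``$n$-dimensional negative subspace'' framing does not by itself produce the bound with the factor $\tfrac{d}{d-2}$. The paper makes the bridge explicit in two steps you do not name: first, Birman--Schwinger plus homogeneity converts \eqref{CLRbound} into individual eigenvalue decay $\mu_j\le L_{0,d}^{2/d}\|V\|_{L_{d/2}}\,j^{-2/d}$ for $K=(-\Delta)^{-1/2}V(-\Delta)^{-1/2}$; second, writing $\int V\rho=\sum_j(K\psi_j,\psi_j)$ with $\psi_j=(-\Delta)^{1/2}\widetilde\varphi_j$ suborthonormal in $L_2$, the paper's Lemma~\ref{L:sub} (the variational trace bound $\sum_j(K\psi_j,\psi_j)\le\sum_{j=1}^n\mu_j$ for suborthonormal systems) gives $\int V\rho\le L_{0,d}^{2/d}\|V\|_{L_{d/2}}\sum_{j=1}^n j^{-2/d}$. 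Summing the $j^{-2/d}$ and choosing $V=\rho^{p-1}$ (equivalently, your duality sup) finishes. The trace lemma is the piece you are missing.

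Your $d=2$ sketch has a genuine gap. Projecting the Green's function as in the 1D case fails outright because $G(x,x)=+\infty$ in two dimensions, and neither logarithmic Sobolev nor Moser--Trudinger is used here. The paper's route is much more pedestrian: since the target is an $L_1$ norm, one simply writes $\|\rho\|_{L_1}=\sum_j\|\varphi_j\|_{L_2}^2=\sum_j((-\Delta^D_\Omega)^{-1}\theta_j,\theta_j)$ with $\theta_j=(-\Delta^D_\Omega)^{1/2}\varphi_j$ suborthonormal, applies the same Lemma~\ref{L:sub} to get $\le\sum_{j=1}^n\lambda_j^{-1}$, and then invokes the Li--Yau lower bound $\lambda_j\ge 2\pi j/|\Omega|$ (the non-asymptotic form, not just Weyl asymptotics, is what gives the stated constant $|\Omega|/(2\pi)$). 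Your mention of a ``$\sum_j\lambda_j^{-1}$-type sum'' is the right destination, but the Green's-function path you describe does not lead there.
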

\begin{proof} We first consider the scalar case.
For $d=1$  there exists a very simple proof of~\eqref{d1}, and the idea
belongs to C.\,Foias (as is acknowledged in \cite[p.440]{T}, see also~\cite{E-F}).
Using inequality~\eqref{sharp} from Lemma~\ref{L:sharp} below, for an
arbitrary $x\in(0,\ell)$ and $u=\sum_{j=1}^n\xi_j\varphi_j$
we have
$$
\biggl(\sum_{j=1}^n\xi_j\varphi_j(x)\biggr)^2\le\|u\|^2_{L_\infty}\le
\frac \ell 4\sum_{i,j=1}^n\xi_i\xi_j(\varphi_i',\varphi_j')\le\frac \ell 4\sum_{j=1}^n\xi_j^2.
$$
Setting $\xi_j:=\varphi_j(x)$, we obtain~\eqref{d1sharp}.

In the case $d=2$ we observe that
$$
(\nabla\varphi_i,\nabla\varphi_j)=(\varphi_i,-\Delta\varphi_j)=
((-\Delta^D_\Omega)^{1/2}\varphi_i,(-\Delta^D_\Omega)^{1/2}\varphi_j),
$$
where $\Delta^D_\Omega$ is the Dirichlet Laplacian in $\Omega$.
Setting $\theta_j=(-\Delta^D_\Omega)^{1/2}\varphi_j$ we see that
the system $\{\theta_j\}_{j=1}^n$ is suborthonormal in $L_2$. It
now follows from Lemma~\ref{L:sub} that
$$
\|\rho\|_{L_1}=\sum_{j=1}^n(\varphi_j,\varphi_j)=
\sum_{j=1}^n((-\Delta^D_\Omega)^{-1}\theta_j,\theta_j)\le
\sum_{j=1}^n\lambda_j^{-1},
$$
where $\lambda_j$ are the non-decreasing  eigenvalues of
$-\Delta^D_\Omega$, for which the Li--Yau lower bound~\cite{Li-Yau}
for $d=2$
\begin{equation} \label{LiYauscal}
\sum_{j=1}^n\lambda_j\ge\frac{2\pi}{|\Omega|}n^2,
\end{equation}
gives that $\lambda_j\ge\frac{2\pi}{|\Omega|}j$. This
gives~\eqref{d22}, since $\sum_{j=1}^nj^{-1}<\ln n+1$.

It remains to consider the case $d\ge3$. By the Birman--Schwinger
principle (see, \cite{lthbook} for the detailed treatment)
$$N(0,-\Delta-V)=n(V),
$$
where $n(V)$ is
 the number of the
eigenvalues $\mu_j\ge1$ of the operator
$$
V^{1/2}(-\Delta)^{-1}V^{1/2}=HH^*,
$$
where
$$
H=V^{1/2}(-\Delta)^{-1/2},\quad H^*=(-\Delta)^{-1/2}V^{1/2}.
$$
Next, it follows from the Sobolev inequality that the operator
$(-\Delta)^{-1/2}$ (the Riesz potential) is bounded from $L_2$ to
$L_{2p}$, and, by duality, from $L_{(2p)'}$ to $L_2$,
$(2p)'=2d/(d+2)$.

By H\"older's inequality this implies that both $H$ and $H^*$ are
bounded from $L_2$ to $L_2$. Furthermore they are compact and weak
type estimates for their $s$-numbers is the key result in~\cite{Cwikel} implying
inequality~\eqref{CLRbound}.

The operators $H^*H$  and $HH^*$ have the same sequence of non zero
eigenvalues $\mu_1\ge\mu_2\ge\dots\to0$, which, in
addition, depend homogeneously on $V$: $\mu_j(\alpha
V)=\alpha\mu_j(V)$, $\alpha>0$. Therefore
inequality~\eqref{CLRbound} gives that
$$
n(V)= j\Rightarrow \mu_1\ge\dots\ge\mu_j\ge1\Rightarrow n(V/\mu_j)=j
\le\mu_j^{-d/2} L_{0,d}\|V\|_{L_{d/2}}^{d/2},
$$
or
\begin{equation}\label{lamj}
\mu_j\le j^{-2/d} L_{0,d}^{2/d}\|V\|_{L_{d/2}}.
\end{equation}

Turning to the proof of~\eqref{d3} we extend $\varphi_j$ by
zero to the whole of $\mathbb R^d$ and denote this extension by
$\widetilde\varphi_j$. Since
$$
(\nabla\varphi_i,\nabla\varphi_j)=
(\nabla\widetilde\varphi_i,\nabla\widetilde\varphi_j)=
((-\Delta)^{1/2}\widetilde\varphi_i,(-\Delta)^{1/2}\widetilde\varphi_j),
$$
it follows that the system $\{\psi_j\}_{j=1}^n$, where
$\psi_j:=(-\Delta)^{1/2}\widetilde\varphi_j$, is suborthonormal in
$L_2(\mathbb R^d)$.

Setting $K=H^*H$ and $V=\widetilde\rho\,^{p-1}\in L_{d/2}$, where
$\widetilde\rho(x)=\sum_{j=1}^n|\widetilde\varphi_j(x)|^2$, we find
using Lemma~\ref{L:sub} and inequality~\eqref{lamj} that
$$
\aligned
\|\widetilde\rho\|_{L_p}^p=\int_{\mathbb R^d}V(x)\widetilde\rho(x)dx=
\sum_{j=1}^n\|H\psi_j\|^2\\=
\sum_{j=1}^n(K\psi_j,\psi_j)\le\sum_{j=1}^n\mu_j\le
 L_{0,d}^{2/d}\|V\|_{L_{d/2}}\sum_{j=1}^n j^{-2/d}\\
 \le \|V\|_{L_{d/2}}L_{0,d}^{2/d}\frac d{d-2}\,n^{(d-2)/d},
\endaligned
$$
which completes the proof, since
$\|\widetilde\rho\|_{L_p}=\|\rho\|_{L_p}$ and
$\|V\|_{L_{d/2}}=\|\widetilde\rho\|_{L_p}^{p-1}$.

We now consider the vector case. If a system of vector functions is suborthonormal,
then each scalar family of the corresponding components is also suborthonormal.
If  $d=1$, we apply \eqref{d1} for each of the $N$ systems $\{\varphi_i^k\}_{i=1}^n$,
$k=1,\dots,N$ and then add up the results to obtain \eqref{Nd1}.

In  the case $d=2$ we only need the lower bound~\eqref{LiYauvec}
for the eigenvalues $\bm \lambda_j$ of the Dirichlet Laplacian
acting independently on $N$-vectors. Writing $n=Nk+p$, $0\le p<N$,
using the Li--Yau bound~\eqref{LiYauscal} and the convexity of the
function $k\to k^2$ we obtain
$$
\aligned
\sum_{j=1}^n\bm\lambda_j=p\sum_{j=1}^{k+1}\lambda_j+
(N-p)\sum_{j=1}^{k}\lambda_j\ge\\\ge
\frac{2\pi}{|\Omega|}N\left(\frac pN(k+1)^2+\frac {N-p}Nk^2\right)\ge\\\ge
\frac{2\pi}{|\Omega|}N(k+p/N)^2=\frac{2\pi}{N|\Omega|}n^2.
\endaligned
$$
Since $\bm\lambda_j$'s are non-decreasing, this gives that
\begin{equation}\label{LiYauvec}
\bm\lambda_j\ge\frac{2\pi}{N|\Omega|}j
\end{equation} and hence~\eqref{Nd2}.

Finally, for $d\ge3$ we consider the Schr\"odinger operator
$\mathscr H=-\bm\Delta-VI$ acting in $\mathbb R^d$ on $N$-vectors
and the corresponding operators
$$
H=V^{1/2}(-\bm\Delta)^{-1/2},\quad
H^*=(-\bm\Delta)^{-1/2}V^{1/2}.
$$
Then the number of negative eigenvalues of $\mathscr H$ is clearly equal to $Nn(V)$,
and the eigenvalues $\bm\mu_j\ge1$ of $H^*H$ or $HH^*$
are just the eigenvalues $\mu_j$, each repeated $N$ times.
Writing for short  \eqref{lamj} in the form $\mu_j\le cj^{-\alpha}$
we have for $n=Nk+p$ with $p>0$
$$
\bm\mu_n=\mu_{k+1}\le c(k+1)^{-\alpha}<c((Nk+p)/N)^{-\alpha}<N^\alpha cn^{-\alpha},
$$
while for $p=0$ we  have $\bm \mu_n=\mu_k\le N^\alpha cn^{-\alpha}$.

Hence
$$
\bm\mu_j\le N^{2/d}j^{-2/d} L_{0,d}^{2/d}\|V\|_{L_{d/2}}
$$
and we complete the proof as in the scalar case to obtain \eqref{Nd3}.
\end{proof}

\begin{remark}\label{R:sharp rate}
{
Our lower bounds for the fractal dimension additionally show that the
rate of growth of the factor $n^{(d-2)/d}$ in \eqref{d3} is sharp
at least in the power scale, since otherwise there would have been a contradiction
with the lower bound for the dimension.
}
\end{remark}

The corresponding  inequalities used in the cases $d=1,2$
in Theorem~\ref{frac-low} are
collected below.
\begin{proposition}\label{P:d12}
Let $\Omega\subset \mathbb R^d$, $d=1,2$ and let
the
  vector functions
$\varphi_i=(\varphi_i^1,\dots,\varphi_i^N)$
make up a system
$\{\varphi_i\}_{i=1}^n\in \bm H^1_0(\Omega)$ with suborthonormal gradients in
$\bm L_2(\Omega)$ in the sense of \eqref{suborth}.

Then
the following inequalities hold:
\begin{equation}\label{d1}
\sum_{i=1}^n((-\bm\Delta^D_\Omega)^{1/2}\varphi_i,\varphi_i)
\le\sum_{j=1}^n\bm\lambda_j^{-1/2}\le
\left\{
      \begin{array}{ll}
        \frac{N\ell}\pi\ln en, & d=1; \\ \\
        \left(\frac{N|\Omega|}{2\pi}\right)^{1/2}2\sqrt n, & d=2.
      \end{array}
        \right.
\end{equation}
\end{proposition}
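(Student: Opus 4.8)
The plan is to treat the two inequalities in \eqref{d1} separately. For the left-hand inequality I would argue exactly as in the scalar two-dimensional case in the proof of Theorem~\ref{T:Lieb_suborth}. Since $(\nabla\varphi_i,\nabla\varphi_j)_{\bm L_2}=((-\bm\Delta^D_\Omega)^{1/2}\varphi_i,(-\bm\Delta^D_\Omega)^{1/2}\varphi_j)_{\bm L_2}$, the vectors $\theta_j:=(-\bm\Delta^D_\Omega)^{1/2}\varphi_j$ form a suborthonormal system in $\bm L_2(\Omega)$, and $\varphi_j=(-\bm\Delta^D_\Omega)^{-1/2}\theta_j$, so that $((-\bm\Delta^D_\Omega)^{1/2}\varphi_j,\varphi_j)=((-\bm\Delta^D_\Omega)^{-1/2}\theta_j,\theta_j)$. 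The operator $K:=(-\bm\Delta^D_\Omega)^{-1/2}$ is compact, self-adjoint and positive, with eigenvalues $\bm\lambda_j^{-1/2}$ in non-increasing order, so Lemma~\ref{L:sub} applied to $K$ and $\{\theta_j\}_{j=1}^n$ gives precisely $\sum_{j=1}^n((-\bm\Delta^D_\Omega)^{1/2}\varphi_j,\varphi_j)\le\sum_{j=1}^n\bm\lambda_j^{-1/2}$.

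It then remains to bound $\sum_{j=1}^n\bm\lambda_j^{-1/2}$ from above using explicit lower bounds for the eigenvalues of the $N$-vector Dirichlet Laplacian. For $d=2$ I would invoke the Li--Yau type bound \eqref{LiYauvec}, that is $\bm\lambda_j\ge\frac{2\pi}{N|\Omega|}j$, so that $\bm\lambda_j^{-1/2}\le\bigl(\tfrac{N|\Omega|}{2\pi}\bigr)^{1/2}j^{-1/2}$, and then sum using $\sum_{j=1}^n j^{-1/2}\le\int_0^n x^{-1/2}\,dx=2\sqrt n$. For $d=1$ with $\Omega=(0,\ell)$ the scalar Dirichlet eigenvalues are $(\pi k/\ell)^2$, $k\ge1$, hence the $N$-vector Laplacian has $\bm\lambda_j=(\pi\lceil j/N\rceil/\ell)^2\ge(\pi j/(N\ell))^2$; thus $\bm\lambda_j^{-1/2}\le N\ell/(\pi j)$ and $\sum_{j=1}^n\bm\lambda_j^{-1/2}\le\frac{N\ell}{\pi}\sum_{j=1}^n\frac1j\le\frac{N\ell}{\pi}(1+\ln n)=\frac{N\ell}{\pi}\ln(en)$.

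As with the analogous computations earlier in this section, I do not anticipate a genuine obstacle: the work is all done by Lemma~\ref{L:sub} and by classical eigenvalue bounds. The only points requiring a little care are checking that $\varphi_j\mapsto(-\bm\Delta^D_\Omega)^{1/2}\varphi_j$ really preserves suborthonormality (it does, being an isometry of the relevant quadratic forms from $\bm H^1_0(\Omega)$ onto $\bm L_2(\Omega)$), confirming that $K=(-\bm\Delta^D_\Omega)^{-1/2}$ meets the hypotheses of Lemma~\ref{L:sub}, and tracking the constants in the harmonic and $\sum j^{-1/2}$ sums so that \eqref{d1} feeds correctly into the bounds \eqref{d1est} and \eqref{d2est} of Theorem~\ref{frac-low}.
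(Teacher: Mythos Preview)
Your proposal is correct and follows essentially the same route as the paper: the paper likewise sets $\eta_j:=(-\bm\Delta^D_\Omega)^{1/2}\varphi_j$, notes that $(\eta_i,\eta_j)=(\nabla\varphi_i,\nabla\varphi_j)$ so that $\{\eta_j\}$ is suborthonormal (with Lemma~\ref{L:sub} then supplying the first inequality, though the paper leaves this step implicit), and then bounds $\sum_{j=1}^n\bm\lambda_j^{-1/2}$ via exactly the same eigenvalue inequalities you use, namely $\bm\lambda_j\ge(\pi j/(N\ell))^2$ for $d=1$ and the Li--Yau bound~\eqref{LiYauvec} for $d=2$.
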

\begin{proof}
Setting $\eta_j:=(-\bm \Delta^D_\Omega)^{1/2}\varphi_j$, where
$-\bm\Delta_\Omega^D$ is the Dirichlet Laplacian
with eigenvalues $\{\bm\lambda_j\}_{j=1}^\infty=\{\lambda_i,\dots,\lambda_i\}_{i=1}^\infty$
acting independently on $N$-vectors, we see that
the system $\{\eta_j\}_{j=1}^n$ is suborthonormaal in $\bm L_2$, since
$(\eta_i,\eta_j)=(\nabla\varphi_i,\nabla\varphi_j)$.

If $d=1$ and $\Omega=(0,\ell)$, then $\lambda_n=(\pi/\ell)^2n^2$, which clearly
gives that
$$
\bm\lambda_j\ge\frac{\pi^2}{\ell^2}\left(\frac jN\right)^2,
$$
and the case $d=1$ follows.

The case $d=2$ follows from~\eqref{LiYauvec}.
\end{proof}

\begin{lemma}\label{L:sharp}
For a function  $u\in H^1_0(0,\ell)$ it holds that
\begin{equation}\label{sharp}
\|u\|^2_\infty\le\frac \ell 4\|u'\|^2,
\end{equation}
where the constant is sharp and the unique
(up to a constant multiple)
extremal function is
$$
u(x)=\left\{
       \begin{array}{ll}
         x, & 0\le x\le \ell/2, \\
         \ell-x, & l/2\le x\le \ell.
       \end{array}
     \right.
$$
\end{lemma}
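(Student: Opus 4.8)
The plan is to fix a point where $|u|$ attains its maximum, estimate $u$ at that point from both endpoints via the fundamental theorem of calculus and the Cauchy--Schwarz inequality, and then optimize over the location of this point. This is the classical Foias-type argument referred to just above in the excerpt.

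First I would dispose of the trivial case $u\equiv 0$. Otherwise, since $H^1_0(0,\ell)\hookrightarrow C[0,\ell]$ and $u(0)=u(\ell)=0$, the continuous function $|u|$ attains its maximum at some interior point $x_0\in(0,\ell)$ with $|u(x_0)|=\|u\|_\infty>0$. Writing $u(x_0)=\int_0^{x_0}u'(s)\,ds=-\int_{x_0}^{\ell}u'(s)\,ds$ and applying Cauchy--Schwarz to each integral gives $|u(x_0)|^2\le x_0\int_0^{x_0}|u'|^2$ and $|u(x_0)|^2\le(\ell-x_0)\int_{x_0}^{\ell}|u'|^2$. Dividing the first by $x_0$, the second by $\ell-x_0$, and adding yields $|u(x_0)|^2\bigl(\tfrac1{x_0}+\tfrac1{\ell-x_0}\bigr)\le\int_0^{\ell}|u'|^2=\|u'\|^2$, that is, $\|u\|_\infty^2\le\dfrac{x_0(\ell-x_0)}{\ell}\,\|u'\|^2$.

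The remaining step, which is where the constant $\ell/4$ appears, is the elementary bound $x_0(\ell-x_0)\le\ell^2/4$ coming from the arithmetic--geometric mean inequality; combined with the previous line this gives $\|u\|_\infty^2\le\frac\ell4\|u'\|^2$. I do not expect any genuine obstacle; the only point requiring a little care is the sharpness and uniqueness claim. For equality one must have $x_0=\ell/2$ (equality in AM--GM), $u'$ constant on $(0,\ell/2)$ and on $(\ell/2,\ell)$ (equality in each Cauchy--Schwarz step), and the matching condition $u(\ell/2)$ computed from either side forces these two constants to be negatives of one another; together with $u(0)=u(\ell)=0$ this determines $u$ up to a scalar multiple as the tent function in the statement. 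Conversely, for that $u$ one computes directly $\|u\|_\infty^2=\ell^2/4$ and $\|u'\|^2=\ell$, so $\frac\ell4\|u'\|^2=\ell^2/4=\|u\|_\infty^2$, confirming that the constant is attained and hence sharp.
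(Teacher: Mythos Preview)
Your argument is correct and complete, including the sharpness and uniqueness analysis. However, it is \emph{not} the proof the paper gives for this lemma: the paper proceeds via the Green's function $G(x,\xi)$ of $A=-d^2/dx^2$ with Dirichlet boundary conditions, writes $u(\xi)=(G(\cdot,\xi),Au)=(A^{1/2}G(\cdot,\xi),A^{1/2}u)$, applies Cauchy--Schwarz in $L_2$, and then uses $(AG(\cdot,\xi),G(\cdot,\xi))=G(\xi,\xi)=\xi(1-\xi)$ (after scaling to $\ell=1$) to obtain directly $u(\xi)^2\le \xi(1-\xi)\|u'\|^2\le\tfrac14\|u'\|^2$. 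Both routes arrive at the same pointwise bound $u(x)^2\le \frac{x(\ell-x)}{\ell}\|u'\|^2$; yours reaches it by the elementary device of splitting the fundamental theorem of calculus at $x_0$ and combining two Cauchy--Schwarz estimates, whereas the paper's Green's function method identifies the extremal in one stroke (equality in Cauchy--Schwarz forces $u=cG(\cdot,\xi)$, which is exactly the tent function) and generalizes more readily to other self-adjoint operators. A small side remark: the ``Foias-type argument'' you allude to is, in the paper, the use of this lemma to handle suborthonormal systems in the proof of Theorem~\ref{T:Lieb_suborth}, not the proof of the lemma itself.
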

\begin{proof}
It is highly likely that this inequality is known, but  for the sake
of completeness we shall prove it following~\cite{IZ}.
By scaling it suffices to prove \eqref{sharp} for  $\ell=1$.
Let $A$ be the operator  $-d^2/dx^2$ with Dirichlet boundary conditions
$u(0)=u(1)=0$. Its Green's function is
$$
G(x,\xi)=\left\{
           \begin{array}{ll}
             (1-\xi)x,\ x\le\xi, \\
             (1-x)\xi,\ \xi\le x.
           \end{array}
         \right.
$$
For an arbitrary $\xi\in(0,1)$ by the definition of the Green's function
and using that $A$ is positive definite, we obtain by the Cauchy--Schwartz
inequality
$$
\aligned
u(\xi)^2&=(G(\cdot,\xi),Au)^2=
(A^{1/2}G(\cdot,\xi),A^{1/2}u)^2\le\|(A^{1/2}G(\cdot,\xi)\|^2
\|A^{1/2}u\|^2=\\&=
(AG(\cdot,\xi),G(\cdot,\xi))(Au,u)=G(\xi,\xi)\|u'\|^2=
(1-\xi)\xi\|u'\|^2\le\frac14\|u'\|^2,
\endaligned
$$
where the first inequality turns into the equality if
$u(x)=cG(x,\xi)$.
\end{proof}

\begin{remark}\label{R:CLR_constant}
{\rm The constant $L_{0,d}$ is traditionally compared with its
semiclassical lower  bound
$$
L_{0,d}\ge L_{0,d}^{\mathrm{cl}}:=\frac{\omega_d}{(2\pi)^d}\,.
$$
The best to date bound for $L_{0,3}$ is Lieb's bound~\cite{L}
$$L_{0,3}\le 6.8693\cdot L_{0,3}^{\mathrm{cl}}=0.116\dots.
$$
For the recent progress in higher dimensions see \cite{Hund}.
A short proof of inequality \eqref{CLRbound} with good estimates of the constants is
 given in~\cite{FrankJST}.
}
\end{remark}

\begin{remark}\label{R:Lieb}
{\rm
For $d=1$ and $d\ge3$ we follow \cite[Theorem 1]{LiebJFA}
adapting the proof to the suborthonormal case by means of Lemma~\ref{L:sub} and specifying the constants
for $d\ge3$.
}
\end{remark}

\subsection*{Acknowledgement}
This work was done with the financial support from the Russian
Science Foundation (grant  no. 23-71-30008).

\end{document}